\newcommand{\rev}[1]{{\color{black} #1}}
\newcommand{\rw}[1]{{\color{black} #1}}
\newcommand{\R}{\mathbb{R}}
\newcommand{\0}{\mathbf{0}}
\newcommand{\1}{\mathbf{1}}
\newcommand{\psd}{\mathbb{S}}
\DeclareMathOperator*{\find}{find}
\DeclarePairedDelimiter{\norm}{\lVert}{\rVert}
\DeclarePairedDelimiter{\Tr}{\textrm{Tr}(}{)}
\DeclarePairedDelimiterX{\inp}[2]{\langle}{\rangle}{#1, #2}
\newtheorem{thm}{Theorem}[section]
\newtheorem{lem}[thm]{Lemma}
\newtheorem{prop}[thm]{Proposition}
\newtheorem{prob}[thm]{Problem}
\newtheorem{defn}{Definition}[section]
\newtheorem{rmk}{Remark} 
\newcommand{\kcol}{\otimes_{\textrm{col}}}
\newcommand{\bth}{\mathbf{\Theta}}
\newcommand{\bx}{\mathbf{X}}
\newcommand{\bu}{\mathbf{U}}
\newcommand{\bw}{\mathbf{W}}
\newcommand{\xp}{\mathbf{X}_+}
\newcommand{\xn}{\mathbf{X}_-}
\newcommand{\xd}{\mathbf{X}_\delta}
\newcommand{\dc}{\mathcal{D}}
\newcommand{\aell}{\{A_\ell\}}
\DeclarePairedDelimiter{\qbp}{\textrm{QBP}(}{)}
\DeclarePairedDelimiter{\sqbp}{\textrm{SQBP}(}{)}
\newcommand{\kron}{\otimes}
\newcommand{\allassum}{A1-A5}
\title{\LARGE \bf Data-Driven Gain Scheduling Control  \\ of Linear Parameter-Varying Systems  \\ using Quadratic Matrix Inequalities
}
\author{Jared Miller$^1$,  Mario Sznaier$^1$
\thanks{$^1$J. Miller, and M. Sznaier are with the Robust Systems Lab,  ECE Department, Northeastern University, Boston, MA 02115. (e-mails: miller.jare@northeastern.edu, msznaier@coe.neu.edu).}
%Don't know if these are still valid
\thanks{J. Miller and M. Sznaier were partially supported by NSF grants  CNS--1646121, ECCS--1808381 and CNS--2038493, AFOSR grant FA9550-19-1-0005, and ONR grant N00014-21-1-2431.  
% J. Miller was partially supported by the Chateaubriand Fellowship.
J. Miller was in part supported by the Chateaubriand Fellowship of the Office for Science \& Technology of the Embassy of France in the United States.}}
\begin{document}

\maketitle
\thispagestyle{empty}
\pagestyle{empty}

%%%%%%%%%%%%%%%%%%%%%%%%%%%%%%%%%%%%%%%%%%%%%%%%%%%%%%%%%%%%%%

\begin{abstract}
\label{sec:abstract}
This paper synthesizes a gain-scheduled controller to stabilize all possible Linear Parameter-Varying (LPV) plants that are consistent with measured input/state data records. Inspired by prior work in data informativity and LTI stabilization, a set of Quadratic Matrix Inequalities is developed to represent the noise set, the class of consistent LPV plants, and the class of stabilizable plants. The bilinearity between unknown plants and `for all' parameters is avoided by vertex enumeration of the parameter set. Effectiveness and computational tractability of this method is demonstrated on example systems.

% \rev{Revised Text}

% Extending prior work in data informativity 

% \urg{Abstract goes here}

% \urg{\Vier\SechBL \  \Vier  \ QMI

% \urg{LCSS: Sept 9, ACC: Sept 23}

\end{abstract}
\section{Introduction}
\label{sec:introduction}

% \urg{The venue for this paper is ACC, 2022. The current ACC-LCSS deadline is Sept. 7, 2022. It would be best if this paper in a first/second draft form by August 21 (before the MURI meeting), and if it was finished (final draft) by Sept. 1. I would like to submit this before I move in to Boston around Sept. 6. I am aware that Mario is very busy with MURI, Reviews, and Rajiv's paper. I hope that this short paper will fit within his schedule.

% I think this paper might be more appropriate for ACC than ACC-LCSS.}

% \urg{Introduction goes here. }
This paper performs \ac{DDC} of discrete-time \ac{LPV} systems using \acp{QMI}. The problem setting involves parameter-affine \ac{LPV} systems in which the parameter may vary arbitrarily within a polytope and the measured data admits a quadratic description in its noise. When the system has $n$ states, $m$ inputs, $L$ parameters, and $N_v$ vertices in the parameter polytope, we propose a non-conservative \ac{LMI} to find a quadratically stabilizing gain-scheduled controller for all consistent \ac{LPV} plants involving $N_v$ \ac{PSD} constraints of size $n(L+1) + m$ (continuous-time) or $n(L+2) + m$ (discrete-time) and a single \ac{PD} constraint of size $n$.

% \urg{Literature survey, references. LPV, Data Driven, etc.}

% \urg{Survey: LPV}

\acp{LPV} systems are a class of linear systems whose plant dynamics depend on externally measured parameters. \ac{LPV} systems have been employed to model and control nonlinear dynamics such as in vehicle control \cite{sename2013robust}, missile control \cite{apkarian1995self}, and chemical processes \cite{bachnas2014review}. Gain-scheduling control sets the input to be a function of the state and measured parameter \cite{rugh2000research}. 
%\cite{sharma1992gain,rugh2000research}. 
% 
Examples of quadratically stabilizing gain-scheduling through a common Lyapunov function include backsubstitution \cite{becker1993control}, interpolated vertex-controllers when the \ac{LPV} dynamics are parameter-affine \cite{apkarian1995self}, and the use of a dynamic compensator when the plant dynamics are a Linear Fractional Transformation of the applied parameter \cite{packard1994gain}. The work in \cite{scherer2001lpvqmi} applied different \acp{QMI} for robust control of a single given continuous-time \ac{LPV} plant.

% Continuous-time \ac{LPV} systems can use 

% \urg{Survey: Data Driven}

\ac{DDC} is a methodology of formulating controllers for all possible plants that are consistent with measured input/output relations (data) \cite{HOU20133}. Such algorithms avoid an  expensive system-identification step to construct a generalized plant model. A survey of data-driven techniques is provided in \cite{hou2017datasurvey}. 
One class of \ac{DDC} methods applies Willem's Fundamental Lemma, which parameterizes all possible system responses by linear combinations \rw{of} a single trajectory's Hankel matrices if a rank condition is satisfied (persistency of excitation) \cite{willems2005note}. 
% berberich2022constructive
% The Lemma has been extended to cases where the data is sourced from multiple trajectory executions of the same system \cite{van2020willems}.
This Lemma can be used for stabilization/regulation \cite{depersis2020formulas} and  Model Predictive Control \cite{coulson2019data, berberich2020mpc} with optional regularization to reduce sensitivity to noise.

% among other applications. Regularization schemes \cite{depersis2020formulas, coulson2019data, coulson2022robust} have been used to reduce the Lemma's  sensitivity to noise in the recorded data. 
% This Lemma can be used for stabilization/regulation \cite{depersis2020formulas} and Model Predictive Control \cite{coulson2019data, berberich2020mpc} among other applications. Regularization schemes \cite{depersis2020formulas, coulson2019data, coulson2022robust} have been used to reduce the Lemma's  sensitivity to noise in the recorded data. 

% Another noise model for \ac{DDC} includes the  $L_\infty$-bounded noise (polytopic) case,  arising from finite-difference approximations in the conversion of continuous-time signals to discrete-time. Work addressing \ac{DDC} of $L_\infty$-bounded noise includes \cite{cheng2015, dai2018data,DAI20203965}.

% The work in \cite{waarde2020informativity} forms a `data informativity' framework for control: data is informative for stabilization if there exists 

When the noise corrupting the recorded data admits a quadratic description, \acp{QMI} may be used in a non-conservative manner to describe the noise set and \rw{the} set of consistent plants \cite{waarde2020noisy}. Their work forms a matrix S-Lemma \cite{yakubovich1997s},
%polik2007survey
providing conditions under which the satisfaction of one \ac{QMI} implies another \ac{QMI} \cite{waarde2022qmi}, in order to perform quadratic stabilization and robust control ($H_2$ and $H_\infty$). The \ac{QMI}-with-S-Lemma approach has also been used to stabilize nonlinear systems with state-dependent representations \cite{dai2021nonlinear}, to form a robust-control framework incorporating prior knowledge \cite{berberich2020combining}, to analyze and control continuous-time systems \cite{BERBERICH2021210},  to iteratively stabilize networked systems with block-structured controllers  \cite{eising2022informativity}, \rev{and to impose \ac{LMI}-region performance constraints on robust controllers  \cite{bisoffi2022learning}}.

% We note that \ac{QMI} approaches are not usable in cases where the noise process lacks a quadratic description. 

% \urg{Survey: LPV and data driven}
\ac{DDC} has been previously applied to \ac{LPV} systems, as surveyed by \cite{bachnas2014review}. Other instances of \ac{DDC} for \ac{LPV} include using Support Vector Machines \cite{formentin2016direct}, hierarchical control \cite{piga2018direct}, and Willem's Fundamental Lemma \cite{verhoek2021fundamental}. \rev{The related problem of DDC of switched systems was studied in \cite{dai2018moments} using polynomial optimization}. To the best of our knowledge, \acp{QMI} and the matrix S-Lemma have not been used for \rev{\ac{DDC} of \ac{LPV} systems}.

% This paper

% A general \ac{LPV} system in discrete time has the following $\theta$-parameter-dependent dynamics,
% \begin{align}
    
% \end{align}

The contributions of our work are
\begin{itemize}
    \item A presentation of the Data-Driven \ac{LPV} quadratic stabilization problem parameterized by \acp{QMI}
    % \item A gain-scheduled controller of a vertex-enumeration to form a gain-scheduled controller by \acp{QMI}
    \item An \ac{LMI} to achieve quadratic stabilization via gain-scheduling vertex-\acp{QMI} with Kronecker structure in continuous-time and discrete-time
    \item An accounting of computational complexity which includes allowances for sparsity
\end{itemize}

This paper has the following structure: 
Section \ref{sec:preliminaries} reviews preliminaries such as notation, \ac{LPV} stabilization, and the use of \acp{QMI} in forming stabilizing controllers. Section \ref{sec:lpv_qmi} applies this \ac{QMI} method for \ac{LPV} stabilization. 
Section \ref{sec:h2_control} performs worst-case suboptimal $H_2$ control on LPV plants consistent with the noise structure.
Section \ref{sec:examples} demonstrates this stabilization approach on example systems. Section \ref{sec:conclusion} concludes the paper.
% Section \ref{sec:preliminaries} will review preliminaries such as notation, notions of stability for linear systems, and \ac{SOS} proofs of polynomial nonnegativity. Section \ref{sec:full_method} will present 
% The paper is concluded in Section \ref{sec:conclusion}.
\section{Preliminaries}
\label{sec:preliminaries}

% \subsection{Acronyms/Initialisms}
\begin{acronym}[LPVA]
\acro{DDC}{Data Driven Control}

\acro{LMI}{Linear Matrix Inequality}
\acroplural{LMI}[LMIs]{Linear Matrix Inequalities}
\acroindefinite{LMI}{an}{a}

% \acro{LP}{Linear Program}
% \acroindefinite{LP}{an}{a}

\acro{LPV}{Linear Parameter-Varying}
\acroindefinite{LPV}{an}{a}

\acro{LPVA}{LPV A-affine}
\acroindefinite{LPVA}{an}{a}

% \acro{OCP}{Optimal Control Problem}

% \acro{ODE}{Ordinary Differential Equation}

% \acro{PDE}{Partial Differential Equation}

\acro{PD}{Positive Definite}

\acro{PSD}{Positive Semidefinite}

\acro{SDP}{Semidefinite Program}
\acroindefinite{SDP}{an}{a}

% \acro{SIR}{Susceptible, Infected, Removed}

\acro{QMI}{Quadratic Matrix Inequality}
\acroplural{QMI}[QMIs]{Quadratic Matrix Inequalities}

\end{acronym}

\subsection{Notation}
The double dots in $1..L$ represent the sequence of natural numbers between $1$ and $L$.
The $n$-dimensional real vector space is $\R^n$. The nonnegative real orthant is $\R^n_{\geq 0}$ and the cone of positive vectors is $\R^n_{>0}$. The set of real-valued $m \times n$ matrices is $\R^{m \times n}$. The transpose of a matrix $M$ is $M^T$. The kernel (nullspace) of a matrix $M$ is $\textrm{ker}(M)$.
% Nonnegativity (positivity) of a vector $\alpha \in \R^n$ is written as $\alpha \geq 0$ ($\alpha > 0$).  
% Positive semidefiniteness (positive definiteness) of a symmetric matrix $P = P^T$ is marked by $P \succeq 0$ ($P \succ 0$). 
The set of symmetric matrices of size $n$ is $\psd^n$, and
its subsets of \ac{PSD} and \ac{PD} matrices are $\psd_+^n$ and $\psd_{++}^n$. The vertical concatenation of matrices $A$ and $B$ of compatible dimensions is $[A; B]$ and their horizontal concatenation is $[A, B]$. The symmetrization operator applied to $M \in \R^{n \times n}$ is $\textbf{sym}(M) = (M+M^T)/2$. \rw{The pseudoinverse of a matrix $M$ is $M^\dagger$.}
% A quadratic expression $X^T M X$ may be canonically abbreviated as $X^T M [\star]$ or $[\star]^T M X$.
% The block-diagonal matrix formed by $A$ and $B$ is $\blkdiag{A,B}$.

The matrices $I_{n}, \ \0_{m \times n}, \ \1_{m \times n}$ are respectively the identity, zeros, and ones matrices of appropriate dimensions. 
The dimension subscripts will be dropped when the matrix sizes are unambiguous. The $\ast$ marking will be used in block matrices to refer to the canonical transpose of oppositely-indexed elements. The Kronecker product of matrices $P$ and $Q$ is $P \otimes Q$. The Hadamard (elementwise) product of matrices is $P \odot Q$. The symbol $\kcol$ will denote the column-wise Khatri-Rao product for matrices $A \in \R^{m \times n}, \ B \in \R^{p \times n}$
\cite{khatri1968solutions}
\begin{equation}
    \label{eq:kron_col}
    A \kcol B = (\1_{p \times 1} \otimes A) \odot (B \otimes \1_{m \times 1}).
\end{equation}

\rev{The convex hull of a set of points $P = \{p_j\}^{N}_{j=1}$ is $\textrm{conv}(P)$.} The notation $\delta x$ will mean the derivative $\dot{x}$ in continuous-time or the next state $x_+$ in discrete-time.
% The operation $\kcol$ multiplies each row of $A$ by each row of $B$.

% \urg{This will be used as in $\bth \otimes \xn$}

\subsection{LPV Stabilization}

% The ground-truth plant under consideration is an LPV system with state $x \in \R^n$ and input $u \in \R^m$. The discrete-time \ac{LPV} dynamics depend on an external measurable parameter $\theta \in \Theta \subset \R^L$,
\ac{LPV}  dynamics with state $x \in \R^n$, input $u \in \R^m$, and measurable parameter $\theta \in \Theta \subset \R^L$ are
\begin{align}    \delta x &= A(\theta) x + B(\theta) u. \label{eq:lpv_general}\\
\intertext{The \acf{LPVA} structure \cite{besselmannlofberg2012}  has $B$ constant and $A$ $\theta$-affine
for some set of matrices $\forall \ell: A_\ell \in \R^{n \times n}$ if}
    \delta x &= \textstyle \left( \sum_{\ell=1}^L A_\ell \theta_\ell\right )x + B u. \label{eq:LPVA}
\end{align}

\rev{This preliminary subsection will deliver exposition on the case where $(\{A_\ell\}, B)$ are known and fixed while $\theta$ is unknown and measured on-line. The main body of the paper will focus on the setting where the plant $(\{A_\ell\}, B)$ is unknown but consistent with observed data.}

\begin{rmk}
\ac{LPVA} structure may be rendered affine in the parameter by adjoining a new constant  $\theta_0 = 1$ to $\theta$.  
\end{rmk}

% The closed-loop dynamics of parameter-dependent state-feedback control policy $u = K(\theta) x$ applied to system \eqref{eq:lpv_general} is $\delta x = (A(\theta) + B(\theta) K(\theta)) x$.
% The controller $K(\theta)$ \textit{stabilizes} \eqref{eq:lpv_general} if $(A(\theta)+B(\theta) K(\theta))$ is Hurwitz for each $\theta \in \Theta$. The controller $K(\theta)$ \textit{quadratically stabilizes} \eqref{eq:lpv_general} if there exists a constant matrix $P \succ 0$ such that $P - (A(\theta)+B(\theta) K(\theta)) P (A(\theta)+B(\theta) K(\theta))^T \succ 0$. These stabilization tasks are difficult in general when both $A(\theta), B(\theta)$ are parameter varying.

\rev{Let $\Omega = \{\omega_v\}_{v=1}^{N_v}$ be a finite set of $N_v$ points in $\R^L$. In this paper, the parameter set $\Theta$ will be chosen to be the compact convex polytope $\Theta = \textrm{conv}(\Omega)$. We will refer to $\Omega$ as the vertices of $\Theta$ (or as vertices more generally).}

% Assume that $\Theta$ is a compact convex polytope. Define $\Omega$ as the set of vertices of $\Theta$ such that $\Theta = \textrm{conv}(\Omega)$. Each of the $N_v$ vertices contained in $\Omega$ is a column vector $\omega_v \in \R^L$ and will be indexed as $\omega_{\ell v}$ for $\ell = 1..L$.
% with $N_v$ vertices $\Omega = \{\omega_v\}_{v=1}^{N_v}$ such that $\Theta = \textrm{conv}(\Omega)$. 

\rev{
A vertex-controller $K_v \in \R^{m \times n}$ is defined at each vertex $\omega_v$ in $\Omega$, yielding the state-feedback law $u = K_v x$. Given a parameter $\theta \in \R^L$, a gain-scheduled controller $u=K(\theta) x$ may be found by first solving for a feasible $c \in \R^{N_v}$ using Linear Programming
\begin{subequations}
\label{eq:gain_sched}
\begin{align}
    \textrm{find} \ c & \in \R^{N_w}_+ &  \textstyle \sum_{v=1}^{N_v} c_v & = 1 &\textstyle   \sum_{v=1}^{N_v} c_v \omega_v & = \theta,
    \label{eq:interp_vert}
\end{align}
and then returning the control policy,
\begin{align}
     K\rev{(\theta)} &= \textstyle \sum_{v=1}^{N_v} c_v K_v & u &= K(\theta) x. \label{eq:gain_sched_out}
\end{align}
\end{subequations}

% Algorithm \ref{alg:gain_sched} computes a gain-scheduled controller $u=K(\theta)x$ given the collection $\{K_v\}_{v=1}^{N_v}$ and a parameter $\theta \in \Theta$. 
Any feasible point $c$ of \eqref{eq:interp_vert} will serve: uniqueness of $K(\theta)$ is not required.% A set of matrices $\{K_v\}_{v=1}^{N_v}$ with each $K_v \in \R^{m \times n}$ define a group of  vertex 
}
% A gain-scheduled controller $u = K(\theta) x$ is defined by a collection of matrices , and is computed by \rev{any feasible point of} Algorithm \ref{alg:gain_sched}.
% \begin{algorithm}[h]
% \caption{Gain-Scheduled Controller}\label{alg:gain_sched}
% \KwIn{Parameter $\theta$, Vertices $\Omega$, Controllers $\{K_v\}$}
% \KwOut{Interpolated controller $K$ }
% % $K \in \R^{n \times m}, \ M \in (\R[,B,\dx])^{n \times n}_{\leq 2d}$ \
% Solve \iac{LP} to find \rev{a} $c \in \R^{N_v}$ with, 
% \begin{align}
%     c & \in \R^{N_w}_+ &  \textstyle \sum_{v=1}^{N_v} c_v & = 1 &\textstyle   \sum_{v=1}^{N_v} c_v \omega_v & = \theta
%     \label{eq:interp_vert}
% \end{align} \\
% Return $K\rev{(\theta)} = \sum_{v=1}^{N_v} c_v K_v$
% \end{algorithm}
Application of the gain-scheduled $u = K(\theta) x$ to \rev{the} \ac{LPVA} system \eqref{eq:LPVA} leads to \rev{the} decomposed dynamics
\begin{subequations}
\label{eq:dyn_controlled}
\begin{align}
    \delta x &= A(\theta) x + B K(\theta) x \\
    &= \textstyle \sum_{\ell=1}^L \theta_\ell A_\ell x  + \sum_{v=1}^{N_v} c_v B K_v x \\
    &= \textstyle \left[\sum_{v=1}^{N_v} c_v \left(\sum_{\ell=1}^L \omega_{\ell v} A_\ell\right) + c_v B K_v \right] x.
\end{align}
\end{subequations}
The open-loop system $A_v$ for each vertex $\omega_v$ (multiplied in \eqref{eq:dyn_controlled} by $c_v$) may be defined as
\begin{equation}
\label{eq:av}
    A_v = \textstyle  \sum_{\ell=1}^L \omega_{\ell v} A_\ell.
\end{equation}

\begin{lem}
\label{lem:cone_vert}
If $C$ is a convex cone with elements $z$ and $\Theta \rev{= \textrm{conv}(\Omega)}$ , then the following statements are equivalent:
% \vspace{-0.5cm}
\begin{subequations}
\begin{align}
    \textstyle \sum_{\ell=1}^{L} \theta_\ell z_\ell & \in C & & \forall \theta \in \Theta \label{eq:cone_vert_th}\\
    \textstyle \sum_{\ell=1}^{L} \omega_{\ell v} z_\ell & \in C & & \forall v = 1..N_v \label{eq:cone_vert_v}
\end{align}
\end{subequations}
\end{lem}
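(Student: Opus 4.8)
The plan is to establish the two implications separately, relying on a single structural observation: the map $\theta \mapsto \sum_{\ell=1}^L \theta_\ell z_\ell$ is linear in $\theta$, so it commutes with the convex-combination representation furnished by $\Theta = \textrm{conv}(\Omega)$. The forward direction is then essentially a specialization to the vertices, and the converse is a reassembly using closure of $C$.

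First I would prove \eqref{eq:cone_vert_th} $\Rightarrow$ \eqref{eq:cone_vert_v}. Since each vertex satisfies $\omega_v \in \Omega \subseteq \textrm{conv}(\Omega) = \Theta$, instantiating the universally quantified hypothesis \eqref{eq:cone_vert_th} at the particular choice $\theta = \omega_v$ yields $\sum_{\ell=1}^L \omega_{\ell v} z_\ell \in C$ for every $v = 1..N_v$, which is exactly \eqref{eq:cone_vert_v}.

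For the converse \eqref{eq:cone_vert_v} $\Rightarrow$ \eqref{eq:cone_vert_th}, I would fix an arbitrary $\theta \in \Theta$ and use the definition of the convex hull to write $\theta = \sum_{v=1}^{N_v} c_v \omega_v$ for some $c \in \R^{N_v}_{\geq 0}$ with $\sum_{v=1}^{N_v} c_v = 1$, precisely the representation appearing in \eqref{eq:interp_vert}. Substituting the coordinate-wise identity $\theta_\ell = \sum_{v} c_v \omega_{\ell v}$ and interchanging the two finite sums gives
\begin{equation*}
    \textstyle \sum_{\ell=1}^L \theta_\ell z_\ell = \sum_{v=1}^{N_v} c_v \left( \sum_{\ell=1}^L \omega_{\ell v} z_\ell \right).
\end{equation*}
By hypothesis \eqref{eq:cone_vert_v}, each inner sum lies in $C$, so the right-hand side is a combination of elements of $C$ with nonnegative coefficients $c_v$. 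Invoking the defining closure of the convex cone $C$ under nonnegative scaling and addition, this combination belongs to $C$; as $\theta \in \Theta$ was arbitrary, \eqref{eq:cone_vert_th} follows.

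I do not expect a genuine obstacle here: the result is a one-line consequence of linearity together with cone closure. The only points warranting minor care are that the double sum may be reordered (legitimate, as both index ranges are finite) and that the $c_v$ are nonnegative, so that the combination is conic rather than merely affine. In fact, since the convex-hull representation also guarantees $\sum_v c_v = 1$, the right-hand side is a genuine convex combination, and hence convexity of $C$ alone already suffices for the conclusion; the full cone structure in the hypothesis is not strictly needed for this step but is retained for the downstream \ac{QMI} constructions.
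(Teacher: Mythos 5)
Your proposal is correct and follows essentially the same route as the paper's proof: the forward direction by instantiating at the vertices (which lie in $\Theta$), and the converse by writing $\theta$ as a convex combination of the $\omega_v$ and expressing $\sum_\ell \theta_\ell z_\ell$ as a convex combination of the cone elements $\sum_\ell \omega_{\ell v} z_\ell$. Your added observation that convexity of $C$ alone suffices (since the coefficients sum to one) is a minor refinement not present in the paper, but the argument is otherwise identical.
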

\begin{proof}
Statement \eqref{eq:cone_vert_th} implies \eqref{eq:cone_vert_v} because each vertex $\omega_v$ is an element of $\Theta$.
% Given that 
\rev{Every} point $\theta \in \Theta$ may be represented by a possibly non-unique convex combination of vertices with coordinates $\theta_\ell = \sum_{v=1}^{N_v} c_v \omega_{\ell v}$ \rev{given} that $\Theta = \textrm{conv}( \Omega)$  (\eqref{eq:interp_vert} and Section 2.1.4 of \cite{boyd2004convex}). \rev{Eq.} \eqref{eq:cone_vert_v} implies \eqref{eq:cone_vert_th}, because $\sum_{\ell=1}^{L} \theta_\ell z_\ell$ may be expressed as the convex combination of $C$-elements $\sum_{\ell=1}^{L} \sum_{v=1}^{N_v} \left(c_v \omega_{\ell v} \right) z_\ell$.
% The expression in \eqref{eq:cone_vert_th} may be written as $\sum_{\ell=1}^L \sum_{v=1}^{N_v} c_v \omega_{\ell v} z_\ell = \sum_{v=1}^{N_v} c_v \left( \omega_{\ell v} z_\ell \right)$. 
% The element $\sum_{\ell=1}^{L} \theta_\ell z_\ell$ is expressed as a convex combination of terms 
\end{proof}
\rw{
\begin{defn}
The controller $\rev{u=}K(\theta)\rev{x}$ from  \rev{Eq. \eqref{eq:gain_sched}} \textit{quadratically stabilizes} the \ac{LPVA} system \eqref{eq:LPVA} 
if there exists a $\theta$-independent $Y \in \psd_{++}^n$ (for continuous-time) or a $P \in \psd^n_++$ (for discrete-time)
\begin{subequations}
\begin{align}
    -2 \ \textbf{sym}(Y (A(\theta) + B  K(\theta))) &\in \psd_{++}^{n} & & \forall \theta \in \Theta \label{eq:lpv_stab_noiseless_th_cont}\\
    \begin{bmatrix} P & (A(\theta) + B K(\theta))P \\ \ast &  P \end{bmatrix} &\in \psd_{++}^{2n} & & \forall \theta \in \Theta \label{eq:lpv_stab_noiseless_th}
    \end{align}
    \end{subequations}
\end{defn}

\begin{lem}
\label{lem:vert_stab}
Equations \eqref{eq:lpv_stab_noiseless_th_cont} and \eqref{eq:lpv_stab_noiseless_th} are equivalent to the following respective conditions,
\begin{subequations}
\label{eq:lpv_stab_noiseless_vert}
\begin{align}
    -2 \ \textbf{sym}(Y(A_v + B  K_v)) &\in \psd_{++}^{n}   & & \forall v=1..N_v \label{eq:lpv_stab_noiseless_kv_cont}\\
    \begin{bmatrix} P & (A_v + B K_v) P \\ \ast &  P \end{bmatrix}  &\in \psd_{++}^{2n}   & & \forall v=1..N_v \label{eq:lpv_stab_noiseless_kv}
\end{align}
  \end{subequations}
 
  \end{lem}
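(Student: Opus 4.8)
The plan is to exploit the vertex decomposition of the closed-loop matrix already recorded in \eqref{eq:dyn_controlled}, together with the convexity of the \ac{PD} cone, in the same spirit as Lemma \ref{lem:cone_vert}. Writing $\hat A_v = A_v + B K_v$ for the closed-loop vertex matrices and recalling that any $\theta \in \Theta$ satisfies $\theta = \sum_{v=1}^{N_v} c_v \omega_v$ for some $c$ in the unit simplex (i.e.\ $c_v \geq 0$ and $\sum_v c_v = 1$, per \eqref{eq:interp_vert}), Eq.\ \eqref{eq:dyn_controlled} together with \eqref{eq:av} delivers the key identity
\begin{equation*}
A(\theta) + B K(\theta) = \sum_{v=1}^{N_v} c_v \hat A_v .
\end{equation*}
The crucial structural point is that the plant term $A(\theta) = \sum_v c_v A_v$ and the controller term $B K(\theta) = \sum_v c_v B K_v$ expand with the \emph{same} simplex weights $c_v$, since the gain-scheduled $K(\theta)$ in \eqref{eq:gain_sched_out} reuses the very coefficients $c$ that represent $\theta$.

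First I would treat the continuous-time pair \eqref{eq:lpv_stab_noiseless_th_cont}/\eqref{eq:lpv_stab_noiseless_kv_cont}. Because $M \mapsto -2\,\textbf{sym}(YM)$ is linear in $M$, the identity above gives
\begin{equation*}
-2\,\textbf{sym}\bigl(Y(A(\theta)+B K(\theta))\bigr) = \sum_{v=1}^{N_v} c_v \Bigl(-2\,\textbf{sym}(Y \hat A_v)\Bigr),
\end{equation*}
so the `for all $\theta$' expression is a convex combination of the vertex expressions. For the discrete-time pair I would additionally use $\sum_v c_v = 1$ to distribute the constant diagonal blocks, $P = \sum_v c_v P$, yielding
\begin{equation*}
\begin{bmatrix} P & (A(\theta)+B K(\theta))P \\ \ast & P \end{bmatrix} = \sum_{v=1}^{N_v} c_v \begin{bmatrix} P & \hat A_v P \\ \ast & P \end{bmatrix},
\end{equation*}
again a convex combination of the vertex block matrices.

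The equivalence then follows in both cases. For the forward implication, each vertex $\omega_v$ lies in $\Theta$ and admits the indicator representation $c_v = 1$, $c_u = 0$ for $u \neq v$; evaluating the `for all $\theta$' statement at $\theta = \omega_v$ with these weights gives $K(\omega_v) = K_v$ and reproduces the vertex condition \eqref{eq:lpv_stab_noiseless_vert} verbatim. For the reverse implication I would invoke that $\psd_{++}^{n}$ (resp.\ $\psd_{++}^{2n}$) is a convex cone: if every vertex expression is \ac{PD}, then so is any convex combination with weights $c_v \geq 0$, $\sum_v c_v = 1$ (hence not all zero), which by the displayed identities is exactly the `for all $\theta$' expression. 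This is the same cone-membership argument that proves Lemma \ref{lem:cone_vert}, now carried out with the simplex weights $c$ rather than the raw coordinates $\theta_\ell$.

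I expect the only real subtlety — rather than a genuine obstacle — to be bookkeeping the shared-coefficient decomposition: one must verify that $A(\theta)$ and $K(\theta)$ are expanded with identical weights $c_v$, which is precisely the content of \eqref{eq:dyn_controlled} and hinges on the gain-scheduling rule \eqref{eq:gain_sched_out}. The convex-combination argument also shows the condition is insensitive to the non-uniqueness of $c$, so no fixed choice of gain-scheduling map needs to be singled out. The discrete-time case additionally requires the observation $P = \sum_v c_v P$ so that the constant diagonal blocks fit the convex-combination template; once that is in place the reasoning is identical to the continuous-time one.
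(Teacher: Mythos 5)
Your proof is correct and follows essentially the same route as the paper, which simply invokes Lemma \ref{lem:cone_vert} applied to the cones $\psd^{n}_{++}$ and $\psd^{2n}_{++}$: both arguments rest on writing the closed-loop expression as a convex combination of the vertex expressions (with the shared simplex weights $c_v$ from \eqref{eq:dyn_controlled}) and using convexity of the \ac{PD} cone. Your version merely spells out explicitly the shared-coefficient bookkeeping and the $P = \sum_v c_v P$ step that the paper leaves implicit in its one-line citation.
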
}
    \begin{proof}
  Equivalence of the \rev{respective pairs [\eqref{eq:lpv_stab_noiseless_th_cont}, \eqref{eq:lpv_stab_noiseless_kv_cont}] and [\eqref{eq:lpv_stab_noiseless_th}, \eqref{eq:lpv_stab_noiseless_kv}] } holds by Lemma \ref{lem:cone_vert} with \rev{regard} to the cones $\psd^{n}_{++}$ and $\psd^{2n}_{++}$ \rev{\cite{apkarian1995self}}. 
  \end{proof}
  Pre- and post-multiplying  \eqref{eq:lpv_stab_noiseless_kv_cont} by $Y^{-1}$ yields
  \begin{align}
          -2 \ \textbf{sym}((A_v + B  K_v)Y^{-1}) &\in \psd_{++}^{n}   & & \forall v=1..N_v. \label{eq:lpv_stab_noiseless_kv_cont_inv}
  \end{align}
  Problems \eqref{eq:lpv_stab_noiseless_kv_cont} and \eqref{eq:lpv_stab_noiseless_kv} are convex after substituting $S_v = K_v Y^{-1}$ (using \eqref{eq:lpv_stab_noiseless_kv_cont_inv}) and $S_v = K_v P$ respectively \cite{boyd1994linear}.
% $\forall v=1..N_v$ (equivalent conditions by Schur complements):

% \begin{align}
% \label{eq:lpv_stab_noiseless}
%  P - A_v P A_v^T & \in \psd_{++}^n &  \begin{bmatrix} P & PA_v^T \\ A_v P &  P \end{bmatrix} & \in \psd_{++}^{2n}. 
% \end{align} 

% Convexity of the \ac{LMI} in \eqref{eq:lpv_stab_noiseless} certifies that $A(\theta) + B K(\theta)$ is quadratically stable for every $\theta \in \Theta$ by Lemma \ref{lem:cone_vert}.

\subsection{Quadratic Matrix Inequalities}

This section reviews \acp{QMI} and the matrix S-Lemma approach proposed by \cite{waarde2020noisy, waarde2022qmi}.
\begin{defn}
\label{def:qmi}
Given a matrix $M \in \psd^{n}$, a \textit{QMI} is the \rev{quadratic statement in $X \in \rev{\R}^{n \times k}$}  that $X^T M X \in \psd_{+}^{k}$.
\end{defn}
\acp{QMI} can also be strict with $X^T M X \in \psd_{++}^k$.
    The works in \cite{waarde2020noisy, waarde2022qmi} present conditions under which one \ac{QMI} implies another \ac{QMI}, with specific attention on the scenario where $X$ can be partitioned as $X = [I; Z^T]$ for some $Z$. 
\rev{In this case, the variable $Z$ is referred to as satisfying a QMI constraint.} 

\begin{defn}

\rev{Let $\Phi \in \psd^{n + k}$ be a partitioned matrix,}
\begin{subequations}
\label{eq:qbp}
\begin{align}
    & \Phi_{11} \in \psd^n, \  -\Phi_{22} \in \psd^k_{\rw{+}}. \label{eq:phi_condition}
     \intertext{A matrix $Z \in \R^{n \times k}$ satisfies the \textit{Quadratic Boundedness Property} with respect to \rev{$\Phi$}  ($Z \in \qbp{\Phi}$)  if}
& \begin{bmatrix} I_n \\ Z^T\end{bmatrix} ^T \begin{bmatrix} \Phi_{11} & \Phi_{12} \\ \Phi_{12}^T & \Phi_{22} \end{bmatrix} 
    \begin{bmatrix} I_n \\ Z^T\end{bmatrix} \in \psd_{+}^{k}. \label{eq:qbp_psd}
\end{align}
\end{subequations}
\end{defn}
\rev{
\begin{lem}[Theorem 3.2b of \cite{waarde2022qmi}]
% Assuming that $-\Phi_{22} \in \psd^k_{++}$, there exists a
% $\delta \in (0, \infty)$ such that $\forall Z \in \qbp{\Phi}: \ \norm{Z + \Phi_{22}^{-1} \Phi_{12}} \leq \delta$ for any matrix norm $\norm{\cdot}$.
% Assuming that $\norm{\cdot}$ is a matrix norm and that $-\Phi_{22} \in \psd^{k}_{++}$, there exists a constant $\alpha \in (0, \infty)$ such that
Assuming that $\Phi$ satisfies \eqref{eq:phi_condition}, let $\Phi \mid \Phi_{22}$ be the \rw{Generalized} Schur complement $\Phi_{11} - \Phi_{12} \Phi_{22}^{\dagger} \Phi_{12}$,  $\norm{\cdot}_F$ be the Frobenius norm, and $\lambda_{\max}$ ($\lambda_{\min}$) be the maximum (minimum) matrix eigenvalue. Then for all matrices $ Z \in \qbp{\Phi}:$
\begin{equation*}
\quad \norm{Z + \Phi_{22}^{-1} \Phi_{12}}_F^2 < k \lambda_{\max}(\Phi \mid \Phi_{22}) / \lambda_{\min}(-\Phi_{22}).
\end{equation*}
\rw{$Z$ is therefore bounded if $-\Phi_{22} \in \psd^k_{++}$}.
\end{lem}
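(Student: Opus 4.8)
The plan is to convert the matrix membership $Z \in \qbp{\Phi}$ into an explicit Frobenius-ball constraint on $Z$ by completing the square in the block $\Phi_{22}$, which is genuinely invertible (so the generalized Schur complement reduces to the ordinary one, $\Phi_{22}^{\dagger} = \Phi_{22}^{-1}$) precisely because the hypothesis of the boundedness claim imposes $-\Phi_{22} \in \psd^k_{++}$. First I would expand the quadratic form in \eqref{eq:qbp_psd}, writing its left-hand side as $\Phi_{11} + \Phi_{12}Z^T + Z\Phi_{12}^T + Z\Phi_{22}Z^T$. Viewing this as a quadratic in $Z$ whose leading term $Z\Phi_{22}Z^T$ is negative semidefinite, I would complete the square to express it as $(Z + \Phi_{22}^{-1}\Phi_{12})\,\Phi_{22}\,(Z + \Phi_{22}^{-1}\Phi_{12})^T + (\Phi \mid \Phi_{22})$, where the residual constant is exactly the Schur complement $\Phi \mid \Phi_{22} = \Phi_{11} - \Phi_{12}\Phi_{22}^{-1}\Phi_{12}^T$. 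Checking that the cross terms cancel and that the residual collapses to $\Phi \mid \Phi_{22}$ is a short but careful non-commutative calculation, and this identity is the technical heart of the argument.

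Given the completed square, membership $Z \in \qbp{\Phi}$ becomes equivalent to $(Z + \Phi_{22}^{-1}\Phi_{12})(-\Phi_{22})(Z + \Phi_{22}^{-1}\Phi_{12})^T \preceq \Phi \mid \Phi_{22}$, obtained by moving the definite term to the right and flipping its sign; the left-hand side is positive semidefinite since $-\Phi_{22} \succ 0$. Taking traces of both sides and using the eigenvalue sandwich for a symmetric positive-definite weight gives $\lambda_{\min}(-\Phi_{22})\norm{Z + \Phi_{22}^{-1}\Phi_{12}}_F^2 \le \Tr{(Z + \Phi_{22}^{-1}\Phi_{12})(-\Phi_{22})(Z + \Phi_{22}^{-1}\Phi_{12})^T}$, while on the right $\Tr{\Phi \mid \Phi_{22}} \le k\,\lambda_{\max}(\Phi \mid \Phi_{22})$ by bounding each of the eigenvalues by the largest. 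Chaining these estimates and dividing through by $\lambda_{\min}(-\Phi_{22}) > 0$ yields the asserted bound on $\norm{Z + \Phi_{22}^{-1}\Phi_{12}}_F^2$.

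The conclusion that $\qbp{\Phi}$ is bounded is then immediate: since $-\Phi_{22} \in \psd^k_{++}$ forces $\lambda_{\min}(-\Phi_{22}) > 0$, the right-hand side is finite, so every admissible $Z$ lies in a fixed Frobenius ball centered at $-\Phi_{22}^{-1}\Phi_{12}$. The \emph{strict} inequality is inherited from reading the underlying QMI strictly (as a scalar check such as $a - b^2/c > 0$ already shows); under the non-strict reading of \eqref{eq:qbp_psd} one obtains the corresponding non-strict bound, which certifies boundedness equally well. I expect the main obstacle to be the completing-the-square step together with the correct appearance of the Schur complement, and keeping the transpose placements and non-commutative orderings consistent throughout; once that identity is in place, the trace and eigenvalue estimates are routine.
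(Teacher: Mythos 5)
The paper offers no proof of this lemma at all — it is imported verbatim as Theorem 3.2b of the cited reference — so there is nothing in-paper to compare your argument against. Your completion-of-squares derivation is the standard route (expand the quadratic form, complete the square in the negative-definite block $\Phi_{22}$, identify the residual as the Schur complement, then pass to traces), and it is sound in structure: the identity you isolate as ``the technical heart'' does hold, and the trace/eigenvalue estimates $\Tr{W(-\Phi_{22})W^T} \geq \lambda_{\min}(-\Phi_{22})\norm{W}_F^2$ and $A \preceq B \Rightarrow \Tr{A} \leq \Tr{B}$ are applied correctly.

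Two points deserve care. First, the constant: the Schur complement $\Phi \mid \Phi_{22} = \Phi_{11} - \Phi_{12}\Phi_{22}^{-1}\Phi_{12}^T$ is an $n \times n$ matrix (it lives in the block of $I_n$), so ``bounding each eigenvalue by the largest'' yields $\Tr{\Phi \mid \Phi_{22}} \leq n\,\lambda_{\max}(\Phi \mid \Phi_{22})$, not $k\,\lambda_{\max}$; when $k < n$ the inequality you assert at that step is false. What your argument actually proves is the bound with constant $n$, which is the dimension appearing in the source theorem once its convention is translated to this paper's $(n,k)$; the $k$ in the displayed statement — like the missing transpose in $\Phi_{12}\Phi_{22}^{\dagger}\Phi_{12}$ and the dimensionally inconsistent center $Z + \Phi_{22}^{-1}\Phi_{12}$, which should be $Z + \Phi_{12}\Phi_{22}^{-1}$ for $Z \in \R^{n \times k}$ — is a transcription slip in the statement that you have reproduced rather than corrected, and you should not claim the $k$-version follows from counting eigenvalues. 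Second, strictness: since $\qbp{\Phi}$ is defined by a non-strict QMI, the chain of inequalities gives $\leq$, and the strict $<$ in the statement genuinely fails on the boundary (your own scalar check shows this with $\Phi_{11}=1$, $\Phi_{12}=0$, $\Phi_{22}=-1$, $Z=1$); you flag this correctly, and the non-strict bound is all that is needed for the boundedness conclusion, which is the only use the paper makes of the lemma. Neither issue damages the substance of the argument.
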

}

% \MS{
\begin{defn}
The \textit{Strict Quadratic Boundedness Property} ($Z \in \sqbp{\Phi}$) \rev{holds} if the matrix in \eqref{eq:qbp_psd} is \rev{in $\psd_{++}^k$}.
 \end{defn}
%}
% \MS{It is strange to define a property this way. Shouldn't this be a remark or fact? Also why "PD" instead of $\psd_{+}$?}
Structures of $\Phi$ are listed in Section 2 of \cite{waarde2022qmi}. Particular instances include energy bounds $\Phi_{11} - Z Z^T \in \psd_{+}^n $ (with $\Phi_{12} =\0, \ \Phi_{22} = -I_k$) and individual sample $L_2$ bounds (adding  some conservatism) $\forall k'=1..k: \ \norm{z_{k'}}_2 \leq \epsilon,$ (with $\Phi_{11} = \epsilon^2 k I_n, \Phi_{12} = \0, \Phi_{22} = -I_k$).

\begin{thm}[Strict Matrix S-Lemma, \rev{[Cor. 4.13 of \cite{waarde2022qmi}]}]
\label{thm:slemma}
Let $M, N \in \psd^{m+\rev{k}}$ be matrices \rev{satisfying \eqref{eq:phi_condition}} with the same partitioning scheme and let $Z \in \R^{n \times k}$. The following conditions are equivalent under the assumptions that 
% \rev{$-M_{22}\in  \psd_{++}^k, -N_{22} \in  \psd_{++}^k$},  
$\textrm{ker}N_{22} \subseteq \textrm{ker}N_{12}$,  \rw{$N \mid N_{22} \in \psd_+^n$, and $-M_{22} \in \psd^k_{++}$}:
\begin{subequations}
    \begin{align}
        &Z \in \sqbp{M}, \quad    \forall Z \in \qbp{N} \\
        &\exists \alpha \geq 0, \beta> 0: \\
        &\quad M - \alpha N - \begin{bmatrix}
        \beta I_m & \0 \\ \0 & \0_{k \times k}
        \end{bmatrix} \in \psd_{\rw{+}}^{m+k}. \nonumber
    \end{align}
\end{subequations}
\end{thm}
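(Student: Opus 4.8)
The plan is to establish the two implications of the equivalence separately. The direction from the linear matrix inequality to the QMI implication is a routine congruence argument, so I would dispatch it first; the converse is the substantive part, and I would reduce it to the (non-strict) matrix S-lemma of \cite{waarde2022qmi}, of which this strict version is a specialization.

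For sufficiency, assume $\alpha \geq 0$ and $\beta > 0$ satisfy the displayed inequality, and take any $Z \in \qbp{N}$. I would apply the congruence $\begin{bmatrix} I & Z\end{bmatrix}(\cdot)\begin{bmatrix} I \\ Z^T\end{bmatrix}$ to that inequality. Two of the three resulting terms are immediate: $\begin{bmatrix} I & Z\end{bmatrix}\begin{bmatrix} \beta I & \0 \\ \0 & \0\end{bmatrix}\begin{bmatrix} I \\ Z^T\end{bmatrix} = \beta I$, while $\begin{bmatrix} I & Z\end{bmatrix} N \begin{bmatrix} I \\ Z^T\end{bmatrix} \succeq 0$ precisely because $Z \in \qbp{N}$. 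Hence $\begin{bmatrix} I & Z\end{bmatrix} M \begin{bmatrix} I \\ Z^T\end{bmatrix} \succeq \alpha \begin{bmatrix} I & Z\end{bmatrix} N \begin{bmatrix} I \\ Z^T\end{bmatrix} + \beta I \succeq \beta I \succ 0$, which is exactly $Z \in \sqbp{M}$. This direction uses only $\alpha \geq 0$ and $\beta > 0$ and none of the three regularity hypotheses.

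For necessity, suppose $Z \in \sqbp{M}$ for every $Z \in \qbp{N}$. I would first strip off the strictness by extracting a uniform margin: there should exist $\beta > 0$ with $Z \in \qbp{M_\beta}$ for all $Z \in \qbp{N}$, where $M_\beta := M - \begin{bmatrix}\beta I & \0 \\ \0 & \0\end{bmatrix}$. When $-N_{22} \in \psd_{++}^k$ this is a compactness argument: $\qbp{N}$ is then closed and bounded (the latter by the boundedness lemma stated above), so the continuous map $Z \mapsto \lambda_{\min}(\begin{bmatrix} I & Z\end{bmatrix} M \begin{bmatrix} I \\ Z^T\end{bmatrix})$ attains a positive minimum over it and any smaller $\beta$ works; the degenerate case $\textrm{ker}\,N_{22} \neq \{\0\}$ I would treat by a limiting argument over perturbations of $N_{22}$. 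With the implication now non-strict, I would invoke the non-strict matrix S-lemma of \cite{waarde2022qmi} on the pair $(M_\beta, N)$ — whose hypotheses ($\textrm{ker}\,N_{22} \subseteq \textrm{ker}\,N_{12}$, $N \mid N_{22} \in \psd_+^n$, and a Slater point for $\qbp{N}$) match the standing assumptions together with \eqref{eq:phi_condition} — to obtain a single $\alpha \geq 0$ with $M_\beta - \alpha N \in \psd_+^{m+k}$, which is the claimed inequality.

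The main obstacle is the losslessness embedded in that final step: that a single \emph{scalar} multiplier $\alpha$, rather than a matrix-valued one, certifies a matrix-to-matrix QMI implication. This fails in general, and it is where the two Schur/kernel hypotheses do their work — they guarantee that $\qbp{N}$ is a genuine convex matrix ellipsoid possessing a relative-interior (Slater) point, which is what makes the image of the associated quadratic map convex and the S-procedure exact. Since the present statement is a direct restatement of \rev{[Cor. 4.13 of \cite{waarde2022qmi}]}, I would, after verifying that \eqref{eq:phi_condition} and the three assumptions line up with that reference's hypotheses, cite the deep convexity/duality result rather than reprove it.
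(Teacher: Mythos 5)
The paper offers no proof of this theorem: it is imported verbatim as Corollary 4.13 of \cite{waarde2022qmi}, so the ``paper's proof'' is the citation itself, which is also where your argument ultimately lands. Your reconstruction is sound in outline --- the sufficiency direction via the congruence $[I,\ Z](\cdot)[I,\ Z]^T$ is complete as written, and the necessity direction (extract a uniform margin $\beta$ by compactness of $\qbp{N}$ when $-N_{22} \in \psd^k_{++}$, then invoke the non-strict matrix S-lemma, whose Slater-type hypothesis is supplied by $N \mid N_{22} \in \psd_+^n$ together with the kernel inclusion) matches the strategy of the cited reference, with only the degenerate-$N_{22}$ perturbation step left as a sketch.
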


% % \urg{This part is repetitive and I might need to cut}
% % Stabilization of linear systems may be presented as \acp{QMI}. 
% The following expressions equivalent expresssions prove stability of $\delta x = A x + B u$ for a single $(A, B)$ with control law $u = K x$ (non-parameter-varying, repeating from \eqref{eq:lpv_stab_noiseless}),
% \begin{subequations}
%     \begin{align}
%         & P - (A+BK) P (A+BK)^T \in \psd_{++}^n \\
%         & \begin{bmatrix}
%         P & (A+BK)P \\
%         \ast & P
%         \end{bmatrix} \in \psd_{++}^{2n} \\
%         &[A; B] \in \textrm{SQBP}\begin{pmatrix} P & \0 & \0 \\
%         \0 & -P & -P K^T\\
%         \0 & -KP& K P K^T\end{pmatrix}
%         \end{align}
% \end{subequations}

\section{LPV Stabilization with QMIs}
\label{sec:lpv_qmi}

\subsection{Problem Description}

\rev{A sampling process records a set of $T$ observations from an unknown \ac{LPVA} \rw{system} \eqref{eq:LPVA} under a bounded noise process $w(\cdot)$ (discrepancy) for $t=0..T$
\begin{equation}
\delta x(t) = \textstyle \left( \sum_{\ell=1}^L A_\ell \theta_\ell\right )x(t) + B u(t) 
+ w(t). \label{eq:LPVA_W}
    \end{equation}
This data is collected into matrices $(\bx_-, \bu, \bth)$ }
% A sampling process acquires a \rev{corrupted} set of observations $(\bx, \bu, \bth)$ \rev{that could have arisen} from a $T$-time-step trajectory of an unknown \ac{LPVA} \eqref{eq:LPVA} system  $(A(\theta), B)$ 
\begin{align}
\label{eq:data}
    \begin{array}{cccccr}
        \xn & := & [x(0) & x(1) &  \ldots & x(T-1) ]   \\
        \bu & := & [u(0) & u(1) & \ldots & u(T-1)] \\ 
        \bth & := & [\theta(0) & \theta(1) & \ldots & \theta(T-1)].
    \end{array}
\end{align}

The derivative observations $\dot{\bx}$ (continuous-time) and one-step-ahead records $\xp$ (discrete-time) are
\begin{align}
\begin{array}{cccccr}
    \dot{\bx} & := & [\dot{x}(0) & \dot{x}(1) &  \ldots & \dot{x}(T-1)] \\
    \xp & := & [x(1) & x(2) &  \ldots & x(T)].
    \end{array}
\end{align}

 % The data $\dc$ will denote the tuple $(\bx_\rev{-}, \bu, \bth, \rev{\xd})$\rw{, in which the symbol $\xd$ will refer to $\dot{\bx}$ or $\xp$ as appropriate.}
The symbol $\xd$ will refer to $\dot{\bx}$ or $\xp$ as appropriate. The data $\dc$ will denote the tuple $(\bx_\rev{-}, \bu, \bth, \rev{\xd})$.

% The data $\dc$ will denote the tuple $(\bx_\rev{-}, \bu, \bth, \dot{\bx})$ in continuous-time and $(\bx_{\rev{-}}, \bu, \bth, \rev{\xp})$ in discrete-time.
Let $\bth_\ell \in \R^{1 \times T}$ be the row of $\bth$ associated with parameter $\theta_\ell$.
The discrepancy $\bw$ \rev{collected from \eqref{eq:LPVA_W}} (mathematically equivalent to process noise for discrete-time) associated with the observations in $\dc$ for a given LPVA $(A(\theta), B)$ is
\begin{align}
\label{eq:lpv_noise}
    \bw &= \xd - \textstyle \left(\sum_{\ell=1}^L \bth_\ell \kcol A_\ell\right) \xn- B \bu.
\end{align} 

The following assumptions will be imposed,
\begin{itemize}
    \item[A1] $n, m, L, T$ are all finite and known.
    \item[A2] The set $\Theta$ is a known compact non-empty polytope with vertices $\Omega$. 
    \item[A3] The ground truth system has LPVA structure \eqref{eq:LPVA}.
    \item[A4] There exists a known $\Phi \in \psd^{n + T}$ \rev{satisfying \eqref{eq:phi_condition}}
    % with $-\Phi_{22} \in \psd^T_{++}$ 
    such that $\bw \in \qbp{\Phi}$ for the ground-truth system.
    % \item[A5] \rw{The matrix $\Phi_{22}$ is in $\psd^T_{++}$.}
    % \item[A5] There exists a $\bar{W} \in \R^{n \times T}$ such that $\bar{W} \in \sqbp{\Phi}$.
\end{itemize}

The consistency set of plants $(A(\theta), B)$ compatible with $\dc$ given $\Phi$ is
\begin{align*}
    \Sigma_{\dc}(\Phi) = \{(\{A_\ell\}_{\ell=1}^L, B) \mid \bw \ \rev{\textrm{from  \eqref{eq:lpv_noise}}}\in \qbp{\Phi}\}.
\end{align*}

\begin{rmk}
% The data $\dc$ may be composed of a sequence of observations $(\tilde{x}, \tilde{u}, \tilde{\theta}, \tilde{x}^+ \textrm{or} \tilde{\dot{x}})$ arising from disjoint trajectories when $\bw$ obeys individual sample noise ($\forall t: \  \norm{w_t}_2 \leq \epsilon)$.
% % : \ \Phi_{11} = \epsilon^2 T I, \ \Phi_{12}=0, \ \Phi_{22} = -I$). 
Data matrices arising from multiple trajectories may be horizontally concatenated if the noise structure in $\Phi$ is compatible with the arrangement (Example 2 of \cite{waarde2020informativity}).
% % Example 2 of \cite{waarde2020informativity} contains arrangement of data matrices by 
% \end{rmk}
% \begin{rmk}
% The plant $(\aell, B)$ has $n(nL + m)$ parameters. Every data sample $t$ yields $n$ observations in $\bw_t$. A minimum of $T = nL + m$ samples is therefore required.
\end{rmk}

Our goal is to solve the following problem,
\begin{prob}
\label{prob:lpv_qmi}
Find a gain-scheduled \rev{(Eq. \eqref{eq:gain_sched})} control policy $u = K(\theta) x$ such that $x_+ = (A(\theta) +  B K(\theta)) x$ is quadratically stable for all $(\{A_\ell\}, B) \in \Sigma_\dc$.
\end{prob}

\begin{rmk}
\label{rmk:solve_cone}
Problem \eqref{prob:lpv_qmi} will be solved by enforcing that \eqref{eq:lpv_stab_noiseless_vert} holds for all $(\{A_\ell\}, B) \in \Sigma_\dc$
%The  two pairs of expressions in  \eqref{eq:lpv_stab_noiseless} are respectively equivalent for continuous-time and discrete-time by 
\rw{(Lemma \ref{lem:vert_stab}).}
\end{rmk}
% Our goal is to find a control policy $u = K(\theta) x$ such that $x_+ = (A(\theta) +  B K(\theta)) x$ is quadratically stable for all $(A(\theta), B) \in \Sigma_\dc$. The matrix $K(\theta)$ will be a gain-scheduled controller (Alg. \ref{alg:gain_sched}) defined by matrices $K_v \in \R^{m \times n}$ at each vertex $v=1..N_v$ of $\Theta$.

\subsection{Data Consistency QMI}
The set $\Sigma_\dc(\Phi)$ may be represented as \iac{QMI}.

Using the convention that $\{A_\ell\} = [A_1, A_2, \ldots, A_L]$  and  $\{A_\ell^T\} =  [A_1^T ; A_2^T; \ldots; A_L^T]$, the \rev{discrepancy} matrix $\bw$ from \eqref{eq:lpv_noise} may be represented as
\begin{equation}
    \begin{bmatrix} I_n \\ \bw^T\end{bmatrix} = \begin{bmatrix}
    I_n & \xd \\
    \0_{n \times Ln} & -\bth \kcol \xn \\
    \0_{n \times m} & -\bu
    \end{bmatrix}^T
    \begin{bmatrix}
    I_n \\ \{A_\ell^T\} \\ B^T
    \end{bmatrix}.
\end{equation}
Defining the matrix $\Psi \in \psd^{n + (Ln + m)}$ as
\begin{equation}
\label{eq:psi_matrix}
    \Psi = \begin{bmatrix}
    I_n & \xd \\
    \0_{n \times Ln} & -\bth \kcol \xn \\
    \0_{n \times m} & -\bu
    \end{bmatrix} \Phi \begin{bmatrix}
    I_n & \xd \\
    \0_{n \times Ln} & -\bth \kcol \xn \\
    \0_{n \times m} & -\bu
    \end{bmatrix}^T,
\end{equation}
it holds that the following two descriptions are identical:
\begin{align}
    (\aell, B) &\in \Sigma_\dc(\Phi) & \leftrightarrow & &  [\aell, B] &\in \qbp{\Psi}. \label{eq:qmi_data}
 \end{align}

\subsection{Stabilization QMI}
This section will form \iac{QMI} for stabilization of the subsystem $A_v \in \R^{n \times n}$ at vertex $v$ from \eqref{eq:av} by a controller $K_v \in \R^{m \times n}$.  The continuous-time \ac{LMI} criterion in  \eqref{eq:lpv_stab_noiseless_kv_cont_inv} is equivalent to the following \ac{QMI}
\begin{align}
\label{eq:qmi_stab_k_cont}
[\aell, B] \in \textrm{SQBP} \begin{pmatrix}
    \0 & * & * \\
 -\omega_v \kcol Y^{-1} & \0 & *\\
    -K_v Y^{-1} & \0  & \0
    \end{pmatrix},
\end{align}
as obtained by pre- and post-multiplying \eqref{eq:lpv_stab_noiseless_kv_cont} by the invertible $Y^{-1} \in \psd^n_{++}$.
The discrete-time \ac{LMI} criterion in \eqref{eq:lpv_stab_noiseless_kv} is equivalent to the following \ac{QMI} by collecting terms
\begin{equation}
\label{eq:qmi_stab_k}
    [\aell, B] \in \textrm{SQBP} \begin{pmatrix}
    P & * & * \\
    \0 & -(\omega_v \omega_v^T) \kron P & *\\
    \0 & -(\omega_v^T) \kron (K_vP) & -K_v P K_v^T
    \end{pmatrix}.
\end{equation}

% For notational convenience,  $\Gamma_v \in \psd^{n + (L+1)n + m}$ as the matrix inside the $\textrm{SQBP}$ parentheses in \eqref{eq:qmi_stab_k}.

% In order to use Theorem 13 of \cite{waarde2020noisy} (Theorem \ref{thm:slemma} from this text), an additional assumption is needed,
% \begin{itemize}
%     \item[A6] There exists $[\{\bar{A}_\ell\}, \bar{B}] \in \sqbp{\Psi}$.
% \end{itemize}

\begin{thm}[Continuous-Time]
\label{thm:stab_vert_cont}
% The condition $[\aell, B] \in \sqbp{\Gamma_v}$ 
\rev{Under assumptions \allassum,} \ac{QMI} \eqref{eq:qmi_stab_k_cont} holds for all $(\aell, B) \in \Sigma_\dc(\Phi)$  if and only if $\exists \alpha_v \geq 0, \beta_v > 0$ \rev{such that}
\begin{align}
\begin{bmatrix}
        -\beta_v I_n & \ast & \ast \\
    -\omega_v \kcol Y^{-1} &\0 & \ast \\
    -K_v Y^{-1} & \0 & \0
\end{bmatrix} - \alpha_v \Psi & \in \psd_{+}^{(L+1)n + m} \label{eq:qmi_k_s_cont}.
\end{align}
\end{thm}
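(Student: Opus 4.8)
The plan is to read the claim as a single instantiation of the Strict Matrix S-Lemma (Theorem~\ref{thm:slemma}). I would set the free matrix to be $Z = [\aell, B]$, take the hypothesis matrix $N = \Psi$, and take the target matrix $M_v$ to be the argument of $\mathrm{SQBP}$ appearing in \eqref{eq:qmi_stab_k_cont}. By the data-consistency identity \eqref{eq:qmi_data}, $\qbp{\Psi}$ coincides with $\Sigma_\dc(\Phi)$; and by the very construction of \eqref{eq:qmi_stab_k_cont} from \eqref{eq:lpv_stab_noiseless_kv_cont_inv}, the membership $Z \in \sqbp{M_v}$ is precisely the strict continuous-time vertex stabilization $-2\,\textbf{sym}((A_v + BK_v)Y^{-1}) \succ 0$. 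Hence the sentence ``\eqref{eq:qmi_stab_k_cont} holds for every $(\aell,B)\in\Sigma_\dc(\Phi)$'' is verbatim the antecedent of Theorem~\ref{thm:slemma}: $Z \in \sqbp{M_v}$ for all $Z \in \qbp{\Psi}$.

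Granting the hypotheses of Theorem~\ref{thm:slemma}, its conclusion supplies $\alpha_v \geq 0$ and $\beta_v > 0$ with $M_v - \alpha_v \Psi - \mathrm{diag}(\beta_v I_n, \0) \succeq 0$. I would then note that subtracting $\mathrm{diag}(\beta_v I_n, \0)$ from $M_v$ merely replaces the top-left $\0_n$ block of $M_v$ by $-\beta_v I_n$, so this matrix is literally the left-hand side of \eqref{eq:qmi_k_s_cont}; the algebra of both directions is thereby finished. The ``if'' direction in fact needs nothing beyond this: for any consistent $Z$, writing $X = [I_n; Z^T]$ and using $X^T \mathrm{diag}(\beta_v I_n, \0) X = \beta_v I_n$ together with $X^T \Psi X \succeq 0$ gives $X^T M_v X \succeq \beta_v I_n \succ 0$, i.e.\ $Z \in \sqbp{M_v}$.

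The substance is in verifying the hypotheses of Theorem~\ref{thm:slemma} for this $N = \Psi$ and $M = M_v$, which I expect to be the main obstacle. The conditions on $\Psi$ --- $\ker \Psi_{22} \subseteq \ker \Psi_{12}$ and the generalized Schur complement $\Psi \mid \Psi_{22} \in \psd_+^n$ --- should be inherited from the assumed noise matrix $\Phi$ (A4) together with a data/noise-richness hypothesis (A5), since $\Psi = D\,\Phi\,D^T$ is the congruence-type image of $\Phi$ by the factor $D$ of \eqref{eq:psi_matrix}; I would isolate this inheritance as a short preservation lemma. The delicate point is that Theorem~\ref{thm:slemma} also demands $-M_{22}\in\psd_{++}^k$, whereas here $M_v$ has its $(2,2)$ block equal to $\0$, which is only positive \emph{semi}definite. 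I would sidestep this by using that, under A5, $-\Psi_{22} \succ 0$, so the consistency set $\qbp{\Psi}$ is compact (the boundedness result, Theorem~3.2b of \cite{waarde2022qmi}); the strict inequalities $X^T M_v X \succ 0$ then hold with a uniform margin over this compact set. Perturbing to $M_v^\varepsilon = M_v - \mathrm{diag}(\0_n, \varepsilon I)$ with $\varepsilon$ small enough preserves strict feasibility for all $Z \in \qbp{\Psi}$ while now meeting $-(M_v^\varepsilon)_{22} = \varepsilon I \succ 0$; applying Theorem~\ref{thm:slemma} to $M_v^\varepsilon$ and absorbing the harmless $\mathrm{diag}(\0_n, \varepsilon I)$ term back into the positive-semidefinite remainder yields the required $\alpha_v, \beta_v$ directly, with no limit needed. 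Making this compactness-and-perturbation step airtight is where I anticipate the genuine effort, the remainder being bookkeeping.
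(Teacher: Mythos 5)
Your proposal takes essentially the same route as the paper: both instantiate the Strict Matrix S-Lemma (Theorem~\ref{thm:slemma}) with $N=\Psi$ and $M$ equal to the SQBP argument of \eqref{eq:qmi_stab_k_cont}, identify the antecedent ``\eqref{eq:qmi_stab_k_cont} holds on all of $\Sigma_\dc(\Phi)$'' with ``$Z\in\sqbp{M}$ for all $Z\in\qbp{\Psi}$'' via \eqref{eq:qmi_data}, and reduce the theorem to checking the lemma's hypotheses. The one genuine divergence is how you each treat the condition on $M_{22}$. The paper simply notes that the lower-right block of $M$ is $\0$, so $-M_{22}\in\psd_+$, and moves on; this is consistent with the hypothesis of the underlying result (Cor.~4.13 of \cite{waarde2022qmi} only needs $M_{22}\preceq 0$) but not with the $-M_{22}\in\psd^k_{++}$ written in the paper's own restatement of the lemma. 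You took that restatement at face value and built a compactness-plus-perturbation workaround ($M^\varepsilon = M - \diag{\0,\varepsilon I}$, with a uniform strict margin over the compact set $\qbp{\Psi}$); the argument is sound, but it hinges on $-\Psi_{22}\in\psd_{++}$ to get compactness, which you attribute to an assumption A5 that the paper never actually states (only A1--A4 appear), and the whole detour is unnecessary once the weaker PSD hypothesis is used --- which is exactly what the paper's proof verifies. On the remaining hypotheses you are, if anything, slightly more complete: you flag $\Psi\mid\Psi_{22}\in\psd_+^n$ as something to check, which the paper's proof omits, while your treatment of $\ker\Psi_{22}\subseteq\ker\Psi_{12}$ as inherited from $\Phi$ through the congruence $\Psi = D\Phi D^T$ matches the paper's explicit kernel computation.
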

\begin{proof}
This will follow a similar proof strategy as Sections IV of \cite{waarde2020noisy} \rev{ and V.I of \cite{waarde2022qmi}.}
The $(\alpha_v, \beta_v)$ structure follows from Theorem \ref{thm:slemma}. It remains to affirm the assumptions under which this theorem is valid. 
% Assumption A5 serves as the generalized Slater condition. 
Given that $Y \in \psd_{++}^n$ and $-\Phi_{22} \in \psd_{+}^T$ (A4), the lower-right corner of the matrix in \eqref{eq:qmi_stab_k_cont} and $\Psi$ may each be expressed as
\begin{subequations}
\begin{align}
    -\0_{(L+1)n + m} &\in \psd_{+}^{Ln + m} \\
    -\begin{bmatrix}
    \bth \kcol \xn \\ \bu
    \end{bmatrix} \Phi_{22} \begin{bmatrix}
    \bth \kcol \xn \\ \bu
    \end{bmatrix}^T &\in \psd_{+}^{Ln + m}. 
\end{align}
\end{subequations}
The final condition is that $\textrm{ker} \Psi_{22} \subseteq \textrm{ker} \Psi_{12}$ with
\begin{subequations}
\begin{align}
    \textrm{ker} \Psi_{22} &=  \textrm{ker} \begin{bmatrix}
    \bth \kcol \xn \\ \bu
    \end{bmatrix} \\
    \textrm{ker} \Psi_{12} &= \textrm{ker} (\Phi_{12} + \Phi_{22} \xd) \begin{bmatrix}
    \bth \kcol \xn \\ \bu
    \end{bmatrix}.
\end{align}
\end{subequations}
All conditions are satisfied, so Theorem \rw{\ref{thm:stab_vert_cont}} is proven.
\end{proof}

\begin{thm}[Discrete-Time]
\label{thm:stab_vert}
% The condition $[\aell, B] \in \sqbp{\Gamma_v}$ 
\rev{Under assumptions \allassum,} \ac{QMI} \eqref{eq:qmi_stab_k} is satisfied  $ \forall (\aell, B) \in \Sigma_\dc(\Phi)$  if and only if $\exists \alpha_v \geq 0  \beta_v > 0$ \rev{such that}
\begin{align}
& \begin{bmatrix}
\label{eq:qmi_k_s}
        P-\beta_v I_n & \ast & \ast \\
    \0 & -(\omega_v \omega_v^T) \kron P & \ast \\
    \0 & -(\omega_v^T) \kron (K_v P) & -K_v P K_v^T
\end{bmatrix} - \alpha_v \Psi \nonumber\\
& \qquad \qquad \in \psd_{+}^{(L+2)n + m}.
\end{align}
\end{thm}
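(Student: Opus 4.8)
The plan is to apply the Strict Matrix S-Lemma (Theorem \ref{thm:slemma}) with the substitution $Z=[\aell,B]$, target matrix $M$ equal to the stabilization matrix appearing in \eqref{eq:qmi_stab_k}, and constraint matrix $N=\Psi$ from \eqref{eq:psi_matrix}. Condition (a) of that theorem, $Z\in\sqbp{M}$ for every $Z\in\qbp{\Psi}$, is exactly the requirement that \eqref{eq:qmi_stab_k} hold for all plants $(\aell,B)\in\Sigma_\dc(\Phi)$ (using the identification \eqref{eq:qmi_data}), while condition (b) produces the $(\alpha_v,\beta_v)$ certificate. Since subtracting $\textrm{diag}(\beta_v I_n,\0)$ only replaces the top-left block $P$ of $M$ by $P-\beta_v I_n$, condition (b) is verbatim the inequality \eqref{eq:qmi_k_s}. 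It therefore remains to check the hypotheses of Theorem \ref{thm:slemma}, namely the partition requirement \eqref{eq:phi_condition} for both $M$ and $N$, together with the kernel inclusion and generalized Schur-complement conditions on $N$.

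The hypotheses that reference only $N=\Psi$ are inherited directly from the continuous-time Theorem \ref{thm:stab_vert_cont}, because $\Psi$ is the very same object in both settings: it depends on the data only through the shared outer factor and the appropriate $\xd$ (here $\xd=\xp$), so substituting the discrete-time records changes nothing in these calculations. In particular $-\Psi_{22}=\bigl[\bth\kcol\xn;\ \bu\bigr](-\Phi_{22})\bigl[\bth\kcol\xn;\ \bu\bigr]^{T}\in\psd_+^{Ln+m}$ by A4, and the kernel inclusion $\textrm{ker}\,\Psi_{22}\subseteq\textrm{ker}\,\Psi_{12}$ is computed identically. These verifications need not be repeated.

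The single genuinely new step, specific to the discrete-time matrix $M$, is the $\Phi_{22}$-sign condition on its lower-right block. Collecting the Kronecker terms I would write $-M_{22}=G\,P\,G^{T}$ with $G=\begin{bmatrix}\omega_v\kron I_n\\ K_v\end{bmatrix}\in\R^{(Ln+m)\times n}$, using the mixed-product identities $(\omega_v\kron I_n)P(\omega_v\kron I_n)^{T}=(\omega_v\omega_v^{T})\kron P$ and $(\omega_v\kron I_n)PK_v^{T}=\omega_v\kron(PK_v^{T})$. Because $P\in\psd_{++}^{n}$ this yields $-M_{22}\in\psd_+^{Ln+m}$ unconditionally, satisfying \eqref{eq:phi_condition}; moreover $-M_{22}\in\psd_{++}^{Ln+m}$ whenever $G$ has full column rank $n$, which holds as soon as $\omega_v\neq\0$ since then $(\omega_v\kron I_n)^{T}(\omega_v\kron I_n)=\norm{\omega_v}^2 I_n\succ0$, a situation guaranteed by the adjoined coordinate $\theta_0=1$ of the affine convention. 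The same identities, applied to the full quadratic form in \eqref{eq:qmi_stab_k} with $\aell(\omega_v\kron I_n)=A_v$ from \eqref{eq:av}, confirm that condition (a) reduces to $P-(A_v+BK_v)P(A_v+BK_v)^{T}\in\psd_{++}^n$, i.e. the Schur complement of \eqref{eq:lpv_stab_noiseless_kv}.

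I expect the main obstacle to be precisely this definiteness bookkeeping for $-M_{22}$: obtaining the clean factorization $GPG^{T}$ and arguing the rank of $G$, rather than the $\Psi$-side hypotheses, which transfer from the continuous-time proof. Once $-M_{22}$ is shown to meet the required sign condition, every hypothesis of Theorem \ref{thm:slemma} is in force and the claimed equivalence follows.
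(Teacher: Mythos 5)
Your proposal takes essentially the same route as the paper: apply Theorem \ref{thm:slemma} with $N=\Psi$ and $Z=[\aell,B]$, note that all $\Psi$-side hypotheses carry over verbatim from the continuous-time Theorem \ref{thm:stab_vert_cont}, and reduce the new work to the sign condition on the lower-right block of the stabilization matrix, which you settle by the factorization $-M_{22}=G P G^{T}$ with $G=[\omega_v\otimes I_n;\,K_v]$ --- this is in fact the corrected form of the factorization the paper displays (whose outer factor $[\omega_v\otimes I_n;\,K_vP]$ does not reproduce the stated blocks). One side claim in your write-up is false, though inessential: since $G\in\R^{(Ln+m)\times n}$, the matrix $GPG^{T}$ has rank at most $n$, so $-M_{22}$ can never lie in $\psd_{++}^{Ln+m}$ when $Ln+m>n$, no matter whether $\omega_v\neq\0$ or $G$ has full column rank; only the semidefinite statement $-M_{22}\in\psd_{+}^{Ln+m}$ holds, which is what \eqref{eq:phi_condition} requires and is all the paper verifies as well.
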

\begin{proof}
This proof follows the same pattern as in the above Theorem \ref{thm:stab_vert_cont}. The only modification required is demonstrating that the negative of the lower right-corner matrix in \eqref{eq:qmi_stab_k} is \ac{PSD}, which holds by
\begin{equation}
    \begin{bmatrix}
    (\omega_v \omega_v^T) \kron P & \ast \\
    (\omega_v^T) \kron (K_v P) & K_v P K_v^T
    \end{bmatrix} =  \begin{bmatrix}
    \omega_v  \kron I_n \\
    K_v P 
    \end{bmatrix} P \begin{bmatrix}
    \omega_v  \kron I_n \\
    K_v P 
    \end{bmatrix}^T.
\end{equation}
All other conditions are valid, completing the proof.
\end{proof}

\subsection{Controller Generation Program}
\rev{This subsection will pose a pair of \acp{SDP} to solve
data-driven \ac{LPV} stabilization under continuous-time  and discrete-time, as introduced by Remark \ref{rmk:solve_cone} under assumptions \rev{\allassum}. In the language of \cite{waarde2020informativity}, the tuple $(\dc, \Phi, \Omega)$ is \textit{informative} for \ac{LPV} quadratic stabilization if the respective \ac{LMI}  is feasible.

% 
% Algorithm \ref{alg:lpv_qmi} is the proposed method to solve data-driven \ac{LPV} stabilization, as introduced by Remark \ref{rmk:solve_cone} under assumptions \rev{\allassum}. In the language of \cite{waarde2020informativity}, the tuple $(\dc, \Phi, \Omega)$ is \textit{informative} for LPV quadratic stabilization if \eqref{eq:lpv_qmi_alg} is feasible.
}

\subsubsection{Continuous-Time}
The first matrix of \eqref{eq:qmi_k_s_cont} admits the substitution $P = Y^{-1}, \ S_v = K_v P$ to form the \ac{LMI}
\begin{equation}
    \begin{bmatrix}
        -\beta_v I_n & \ast & \ast \\
    -\omega_v \kcol P &\0 & \ast \\
    -S_v & \0 & \0
\end{bmatrix} - \alpha_v \Psi \in \psd_{+}^{(L+1)n + m} \label{eq:lmi_k_s_cont_y}.
\end{equation}

\rev{
The continuous-time stabilization \ac{SDP} with gain-scheduled control matrices $\{K_v = S_v P^{-1}\}_{v=1}^{N_v}$ is
\begin{subequations}
\label{eq:lpv_qmi_alg_c}
\begin{align}
  \textrm{find} \   & P \in \psd_{++}^n \label{eq:lpv_qmi_alg_p}\\
    & \alpha \in \R_{\geq 0}^{N_v}, \ \beta \in \R_{> 0}^{N_v}, \\
    & S_v \in \R^{m \times n} & & \forall v = 1..N_v \\
    & \textrm{LMI  \eqref{eq:lmi_k_s_cont_y} holds} & & \forall v = 1..N_v. \label{eq:lpv_qmi_alg_gamma}
\end{align}
\end{subequations}
}
% \vspace{-0.5cm}
\subsubsection{Discrete-Time}
The first matrix in \eqref{eq:qmi_k_s} may be expressed using a substitution $S_v = K_v P$ (with $K_v P K_v^T =S_v P^{-1} S_v^T$)
\begin{align}
& \begin{bmatrix}
    P-\beta_v I & \0 & \0 \\
    \0 & -(\omega_v \omega_v^T) \kron P & -\omega_v \kron (S_v^T) \\
    \0 & -(\omega_v^T) \kron S_v & S_v P^{-1} S_v^T
    \end{bmatrix}, \\
    \intertext{followed by a Schur Complement}
    \rightarrow & \begin{bmatrix}
    P-\beta_v I & \0 & \0  & \0 \\
    \0 & -(\omega_v \omega_v^T) \kron P & -\omega_v \kron (S_v^T)  & \0 \\
    \0 & -(\omega_v^T) \kron S_v & \0 & S_v \\
    \0 & \0 & S_v & P
    \end{bmatrix}. \label{eq:gamma_bv}
\end{align}
Letting $\Gamma_v(\beta_v)$ be the matrix in \eqref{eq:gamma_bv}\rw{,}
the \ac{LMI} \eqref{eq:qmi_k_s} from Theorem \ref{thm:stab_vert} may be restated as,
\begin{equation}
    \Gamma_v(\beta_v) - \alpha_v \begin{bmatrix}\Psi & \0 \\ \0 & \0_{n\times n}\end{bmatrix} \in \psd_{+}^{n(L+2)+m}. 
    \label{eq:gamma_bv_lmi}
\end{equation}

% Algorithm \ref{alg:lpv_qmi} is the proposed method to solve data-driven \ac{LPV} stabilization, as introduced by Remark \ref{rmk:solve_cone} under assumptions \rev{\allassum}. In the language of \cite{waarde2020informativity}, the tuple $(\dc, \Phi, \Omega)$ is \textit{informative} for LPV quadratic stabilization if \eqref{eq:lpv_qmi_alg} is feasible. Condition \eqref{eq:lpv_qmi_alg_gamma} will enforce vertex-\acp{LMI} \eqref{eq:lmi_k_s_cont_y} in continuous-time or \eqref{eq:gamma_bv_lmi} in discrete-time.

% \begin{algorithm}[h]
% \caption{LPV Stabilizing Control Generation}\label{alg:lpv_qmi}
% \KwIn{Data $\dc$,  Noise description $\Phi$, Vertices $\Omega$}
% \KwOut{ Vertex-parameterized controls $\{K_v\}$ }
% % $K \in \R^{n \times m}, \ M \in (\R[,B,\dx])^{n \times n}_{\leq 2d}$ \
% Solve or return an infeasibility certificate, 
% \begin{subequations}
% \label{eq:lpv_qmi_alg}
% \begin{align}
%   \textrm{find} \   & P \in \psd_{++}^n \label{eq:lpv_qmi_alg_p}\\
%     & \alpha \in \R_{\geq 0}^{N_v}, \ \beta \in \R_{> 0}^{N_v}, \\
%     & S_v \in \R^{m \times n} & & \forall v = 1..N_v \\
%     & \textrm{LMI \eqref{eq:lmi_k_s_cont_y}  or \eqref{eq:gamma_bv_lmi} holds} & & \forall v = 1..N_v \label{eq:lpv_qmi_alg_gamma}
% \end{align}
% \end{subequations}\\
% Return $\{K_v = S_v P^{-1}\}_{v=1}^{N_v} $
% \end{algorithm}

\rev{
The discrete-time stabilization \ac{SDP} with gain-scheduled control matrices $\{K_v = S_v P^{-1}\}_{v=1}^{N_v}$ is
\begin{subequations}
\label{eq:lpv_qmi_alg_d}
\begin{align}
  \textrm{find} \   & P \in \psd_{++}^n \label{eq:lpv_qmi_alg_p_d}\\
    & \alpha \in \R_{\geq 0}^{N_v}, \ \beta \in \R_{> 0}^{N_v}, \\
    & S_v \in \R^{m \times n} & & \forall v = 1..N_v \\
    & \textrm{LMI  \eqref{eq:gamma_bv_lmi} holds} & & \forall v = 1..N_v. \label{eq:lpv_qmi_alg_gamma_d}
\end{align}
\end{subequations}
}
\begin{rmk}
 In the specific discrete-time case  where $L = 1$ and $\Theta = \{\theta = 1\}$, \rev{Eq. \eqref{eq:lpv_qmi_alg_d}} is identical to \rev{Thm.} 14 of \cite{waarde2020noisy}.
\end{rmk}
\rev{
\begin{rmk}
Programs \eqref{eq:lpv_qmi_alg_c} and \eqref{eq:lpv_qmi_alg_d} can be normalized by constraining $\Tr{P}=1$.
\end{rmk}
}
% \urg{I don't know how to carve out one-column space on the bottom/top of the page to write the large matrices. Instead, I've compressed down to the LMI in \eqref{eq:gamma_bv_lmi}. This will hurt when I do Optimal Control.}

\subsection{Computational Considerations}

The per-iteration complexity of solving \iac{SDP} using an interior point method up to arbitrary (nonzero) accuracy with a single \ac{PSD} variable of size $N$ with $M$ affine constraints is $O(N^3M + M^2 N^2)$ \cite{alizadeh1995interior}. 
% \begin{rmk}
\rev{The continuous-time \ac{SDP} in \eqref{eq:lpv_qmi_alg_c} has 1 \ac{PSD} constraint of size $n$ \eqref{eq:lpv_qmi_alg_p} and $N_v$ \ac{PSD} constraints of size $n(L+1)+m$ \eqref{eq:lpv_qmi_alg_gamma}. The discrete-time \ac{SDP} in  \eqref{eq:lpv_qmi_alg_d} has  1 \ac{PSD} constraint of size $n$ \eqref{eq:lpv_qmi_alg_p_d} and $N_v$ \ac{PSD} constraints of size  $n(L+2)+m$ \eqref{eq:lpv_qmi_alg_gamma_d}.}

% The \ac{SDP} in \eqref{eq:lpv_qmi_alg} has 1 \ac{PSD} constraint of size $n$ \eqref{eq:lpv_qmi_alg_p} and $N_v$ \ac{PSD} constraints of size $n(L+1)+m$ \eqref{eq:lmi_k_s_cont_y} (continuous-time) or $n(L+2)+m$ \eqref{eq:gamma_bv_lmi} (discrete-time). 
The performance of \acp{SDP}  \eqref{eq:lpv_qmi_alg_c} and \eqref{eq:lpv_qmi_alg_d} therefore scales linearly in $N_v$, polynomially in $(n, L, m)$, and independently of $T$. 
Linear dependence on $N_v$ may result in an exponential scaling on $L$ (e.g. a hypercube  with $N_v = 2^L$).

\section{H2 Optimal Control}
\label{sec:h2_control}

A \rev{continuous}-time LPVA state-space system with external input $\rev{\xi} \in \R^{e}$  and regulated output $z \in \R^r$ given matrices $C \in \R^{r \times n}, \ D \in \R^{r \times m}, F \in \R^{n \times e}$ is
\begin{align}
    \dot{x} &= \textstyle \sum_{\ell=1}^L \theta_\ell A_\ell x + B u + F \rev{\xi}, &     z &= C x + Du. \label{eq:h2_sys}
\end{align}

\rev{The recorded data in $\dc$ has $\rev{\xi}=0$ while $\mathbf{W} \in \qbp{\Phi}$. The input $\rev{\xi}$ is applied during system execution.}

% Let $\delta_{k,o}$ be an impulse applied to coordinate $k$ for $k=1..e$. 
Define the $\rev{H}_2$ norm of  \eqref{eq:h2_sys} as \rev{
% as the worst case (over all parameter trajectories) expected value of the output energy $\norm{z}_2^2$ to an \MS{impulsive input $w(t)=w*\delta(t)$ with $\mathcal{E}(ww^T)=I$.}
the worst-case (over all parameter trajectories) expected root-mean-square value of $\norm{z}_2$  when the input $\xi$ is a white noise process with identity covariance.
}
Then we have the following bound:
%Define the $\mathcal{H}_2$ norm of  \eqref{eq:h2_sys} as the worst case value of $\norm{z}_2$ over all parameter trajectories $\theta(0)..\theta(T-1)$ and mixed impulse-inputs $\rev{\xi}(0) = q, \ \rev{\xi}(1)=0\ldots \rev{\xi}(T-1) = 0$ with $q \in \R_{\geq 0}^e, \ \sum_{k=1}^{e} w_k = 1$.
% Define the $\mathcal{H}_2$ norm of  \eqref{eq:h2_sys} as the worst case, over all parameter trajectories, of $\|z\|_2$ when $\rev{\xi} = \delta_{k,o}$. Then we have the following bound:
% The following \ac{LMI} to minimize the $H_2$-norm between $\rev{\xi} \rightarrow z$ is expressed in Section 4.2.2 of \cite{caverly2019lmi}.
% \urg{It is suggested to use the following instead of above, since it contains the smallest size of Gram matrix and directly corresponds to your algorithm. It is fine that $E$ is bilinear since it is assumed known. A few changes: a) I use $Y,M$ instead of $P,F$ to be consistent with other parts. b) replace $\nu$ by $\gamma$ to avoid confusion. c) size of $Z$ is r. d) modify the problem to be worst-case $H_2$ control.
\begin{prop}
\label{prob:h2_single}\rev{There exists a gain-scheduled controller $u = K(\theta) x$ such that the closed-loop}  $H_2$ norm of \rev{the LPVA system} \eqref{eq:h2_sys} is bounded above by $\gamma \in \R_+$ if \rev{for all $v=1..N_v$} the following \ac{LMI} is feasible \cite{de2003parametric}
%\rev{discrete-time version of }
\begin{subequations}
\label{eq:h2_single}
\begin{align}
    \find_{P, Z, S} \quad &  \rev{-2 \ \textbf{sym}(A_v P + B S_v) - F F^T}
    \label{eq:h2_single_hurwitz}\in \psd_{++}^{\rev{n}} \\
    & \begin{bmatrix} Z & C P + D S_v \\ \ast & P \end{bmatrix} \in \psd_{++}^{n+r}  \label{eq:h2_z}\\
    & \Tr Z \leq \gamma^2 \\
    & P \in \psd^n_{++}, \ Z \in \psd^r_+, \ S_v \in \R^{m \times n} \label{eq:h2_def}.
\end{align}
\end{subequations}
\end{prop}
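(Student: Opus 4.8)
\emph{Proof proposal.} The plan is to follow the classical Lyapunov-based $H_2$ synthesis argument at each vertex and then lift it to the whole polytope via the convexity machinery of Lemma \ref{lem:cone_vert}. Given a feasible tuple $(P, Z, \{S_v\})$, I would first recover the vertex controllers $K_v = S_v P^{-1}$ and re-read the two inequalities under the change of variables $S_v = K_v P$. Abbreviating $A_{cl,v} = A_v + B K_v$ and $C_{cl,v} = C + D K_v$, condition \eqref{eq:h2_single_hurwitz} becomes $A_{cl,v} P + P A_{cl,v}^T + F F^T \prec 0$, the Lyapunov inequality bounding the controllability Gramian of the closed-loop vertex system (and, since $P \succ 0$, certifying that $A_{cl,v}$ is Hurwitz). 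Condition \eqref{eq:h2_z} is, by a Schur complement on the block $P \succ 0$, equivalent to $Z \succ C_{cl,v} P C_{cl,v}^T$; combined with $\Tr Z \leq \gamma^2$ this caps the vertex output energy.

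Next I would propagate these certificates to every $\theta \in \Theta$. The closed-loop decomposition \eqref{eq:dyn_controlled} writes $A(\theta)+BK(\theta) = \sum_v c_v A_{cl,v}$ with $c \geq 0$ and $\sum_v c_v = 1$, and likewise $C + DK(\theta) = \sum_v c_v C_{cl,v}$ since $C$ is constant. Because $P$ and $Z$ are common to all vertices and the substituted inequalities are \emph{affine} in $(A_v, S_v)$, the $c_v$-weighted sum of the vertex LMIs reproduces the parameter-dependent relations $A_{cl}(\theta)P + P A_{cl}(\theta)^T + FF^T \prec 0$ and $\begin{bmatrix} Z & C_{cl}(\theta)P \\ \ast & P \end{bmatrix} \succ 0$, where $A_{cl}(\theta) = A(\theta)+BK(\theta)$ and $C_{cl}(\theta)=C+DK(\theta)$. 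This is exactly the positive-definite-cone instance of Lemma \ref{lem:cone_vert} (mirroring the application in Lemma \ref{lem:vert_stab}).

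Finally I would tie the parameter-dependent LMIs to the worst-case $H_2$ norm. Driving the closed loop by unit-covariance white noise $\xi$ from $x(0)=0$, the state covariance $\Sigma(t)=\E[x(t)x(t)^T]$ solves the differential Lyapunov equation $\dot\Sigma = A_{cl}(\theta(t))\Sigma + \Sigma A_{cl}(\theta(t))^T + FF^T$ with $\Sigma(0)=0$. Setting $\Delta = P-\Sigma$ and using the first parameter-dependent inequality gives $\dot\Delta = A_{cl}(\theta(t))\Delta + \Delta A_{cl}(\theta(t))^T + Q(t)$ with $Q(t)\succ 0$ and $\Delta(0)=P\succ 0$; the variation-of-constants formula then expresses $\Delta(t)$ as a sum of congruence transforms of PSD matrices, so $\Delta(t)\succeq 0$, i.e. $\Sigma(t)\preceq P$ uniformly in $t$ and in the (arbitrary measurable) parameter path. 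Consequently $\E[\norm{z(t)}_2^2] = \Tr{C_{cl}(\theta(t))\Sigma(t)C_{cl}(\theta(t))^T} \leq \Tr{C_{cl}(\theta(t)) P C_{cl}(\theta(t))^T} < \Tr{Z} \leq \gamma^2$ at every $t$, whence the root-mean-square output is at most $\gamma$ for every parameter trajectory.

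I expect the last step to be the main obstacle: making the worst-case LPV $H_2$ norm precise and rigorously establishing that one common $P$ dominates the time-varying Gramian $\Sigma(t)$ for \emph{all} parameter trajectories, not merely at frozen vertices. The change of variables and the vertex-to-polytope lifting are routine once the affine structure and the shared $P$, $Z$ are exposed; the analytic Gramian-domination argument, together with care over whether the norm is a steady-state or supremal quantity, carries the real content and motivates the citation of \cite{de2003parametric}.
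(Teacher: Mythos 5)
Your proposal is correct, but there is nothing in the paper to compare it against: the paper gives no proof of Proposition \ref{prob:h2_single} at all, simply deferring to the citation \cite{de2003parametric} and moving directly on to the data-driven LMI \eqref{eq:h2_lmi_gamma}. Your argument is the standard one that underlies that citation, and it holds together. The change of variables $S_v = K_v P$ turns \eqref{eq:h2_single_hurwitz} into the Gramian-type Lyapunov inequality $A_{cl,v}P + PA_{cl,v}^T + FF^T \prec 0$ and \eqref{eq:h2_z}, via a Schur complement on $P \succ 0$, into $Z \succ C_{cl,v} P C_{cl,v}^T$; the vertex-to-polytope lifting is exactly the $\psd^n_{++}$ instance of Lemma \ref{lem:cone_vert}, and it works because $B$, $C$, $D$ are parameter-independent under the LPVA assumption and because $P$ and $Z$ are shared across all vertices, so the substituted inequalities are affine in $(A_v, S_v)$. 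The Gramian-domination step you worry about is also sound: the comparison variable $\Delta = P - \Sigma$ obeys a time-varying Lyapunov differential equation with positive-definite forcing and positive-definite initial condition, so the variation-of-constants representation gives $\Sigma(t) \preceq P$ uniformly over time and over all (measurable, $\Theta$-valued) parameter trajectories, and hence $\E[\norm{z(t)}_2^2] \leq \Tr{C_{cl}(\theta(t)) P C_{cl}(\theta(t))^T} < \Tr{Z} \leq \gamma^2$ pointwise in $t$, which dominates the (informally defined) worst-case RMS quantity the paper calls the $H_2$ norm. The only caveat worth recording is the one you already identify: the paper's $H_2$ norm for an LPV system is stated loosely, so your proof establishes a bound on $\sup_t \E[\norm{z(t)}_2^2]$, which upper-bounds any reasonable time-averaged or steady-state reading of that definition. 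In short, you have supplied a complete proof where the paper offers only a reference.
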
 
The gain-scheduled controller $K(\theta)$ may be recovered from $\{\rev{\forall v: \ }K_v = S_v P^{-1}\}$ and \rev{Eq. \eqref{eq:gain_sched}}.
The variables $(Z, P)$ and given entries $(C, D, F)$ are independent of $(A, B) \in \Sigma_\dc$.
% The indexing matrix $E_0$ may be defined as
% $E_0 = [I_n, \0_{n \times (n(L+2)+m)}]^T$
% in order to pose the following LMI at each vertex $v$
\begin{equation}
    \rev{\begin{bmatrix}
        -\beta_v I_n-F F^T & \ast & \ast \\
    -\omega_v \kcol P &\0 & \ast \\
    -S_v & \0 & \0
\end{bmatrix} - \alpha_v \Psi \in \psd_{+}^{(L+1)n + m} \label{eq:h2_lmi_gamma}.}
\end{equation}

Constraint \rev{ \eqref{eq:h2_lmi_gamma} is equal to \eqref{eq:lmi_k_s_cont_y}} when $F = \0_{n\times e}$, given that conditions \rev{\eqref{eq:h2_single_hurwitz} and \eqref{eq:lpv_stab_noiseless_kv_cont}} are identical under this restriction.

Worst-case $H_2$ control of \eqref{eq:h2_sys} for all $(A(\theta), B) \in \Sigma_\dc$ given $(C, D, F)$ may be conducted by solving
\begin{subequations}
\label{eq:lpv_qmi_h2}
\begin{align}
  \rev{\gamma^2 =} \inf & \quad  \Tr{Z} \\
  & P \in \psd_{++}^n, \ Z \in \psd_+^r \label{eq:lpv_qmi_alg_p_h2}\\
    & \alpha \in \R_{\geq 0}^{N_v}, \ \beta \in \R_{> 0}^{N_v}, \\
    & \textrm{LMIs \eqref{eq:h2_z} and \eqref{eq:h2_lmi_gamma} hold}  & & \forall v = 1..N_v. \label{eq:lpv_qmi_alg_gamma_h2}
\end{align}
\end{subequations}
% \MS{This is not the resulting $H_2$ norm, it is an upper bound, right?. Should say. Using the gain scheduled control matrices $\{K_v = S_v P^{-1}\}_{v=1}^{N_v}$ leads to a closed loop system with $H_2$ norm bounded above by $\gamma = \sqrt{\Tr{Z}}$ }
\rev{The resultant $H_2$ norm is upper-bounded by $\gamma = \sqrt{\Tr{Z}}$ when using gain-scheduled control matrices $\{K_v = S_v P^{-1}\}_{v=1}^{N_v}$.}
All results in this section may be extended to \rev{discrete}-time $H_2$ control with appropriate \acp{LMI}.
% \cite{zhou1998essentials}.
% \begin{algorithm}[h]
% \caption{LPV $H_2$-optimal Control Generation}\label{alg:lpv_qmi_h2}
% \KwIn{Data $\dc$,  Noise description $\Phi$, Vertices $\Omega$, Design Matrices $(C, D, F)$}
% \KwOut{ Vertex-parameterized controls $\{K_v\}$ }
% % $K \in \R^{n \times m}, \ M \in (\R[,B,\dx])^{n \times n}_{\leq 2d}$ \
% Solve or return an infeasibility certificate, 
% \begin{subequations}
% \label{eq:lpv_qmi_h2}
% \begin{align}
%   \inf & \quad  \Tr{Z} \\
%   & P \in \psd_{++}^n, \ Z \in \psd_+^r \label{eq:lpv_qmi_alg_p_h2}\\
%     & \alpha \in \R_{\geq 0}^{N_v}, \ \beta \in \R_{> 0}^{N_v}, \\
%     & \textrm{LMIs \eqref{eq:h2_z} and \eqref{eq:h2_lmi_gamma} hold}  & & \forall v = 1..N_v \label{eq:lpv_qmi_alg_gamma_h2}
% \end{align}
% \end{subequations}\\
% Return $\gamma = \sqrt{\Tr{Z}}, \ \{K_v = S_v P^{-1}\}_{v=1}^{N_v}$
% \end{algorithm}

% when $D = 0$ \urg{TODO: is this correct?}.

% \urg{Code $H_2$ and run experiment}

% \urg{Based on Section V of \cite{waarde2020noisy}. I need to implement this.}

% \urg{There may not be room for $H_2$ control in the 6-page LCSS format. This could be saved for the ArXiv copy. However, that's not enough for a full-fledged journal version.}
\section{Numerical Examples}

\label{sec:examples}

% \urg{github link to code}
% MATLAB (2021a) code to generate the below examples is publicly available at
% \url{https://github.com/daishuyu/noise-in-observations}. 
Experiments were written in Matlab R2021a and are available at \url{https://github.com/jarmill/lpv_qmi} in the folder \texttt{experiments}. Dependencies include Mosek \cite{mosek92} and YALMIP \cite{Lofberg2004}. 
\rev{For both examples, the problem of finding a $\theta$-independent controller $K^c \in \R^{m \times n}$ with $\forall v: \ K_v = K^c$ that stabilizes all plants $(\{A_\ell\}, B)$ in the consistency set   $\Sigma_\dc$ is infeasible.}

% The experimental platform is an Intel Core i9 CPU with a clock speed of 2.30 GHz. 

\subsection{Two-Parameter, Two-State}
%experiment all_2_box_stab_nonsymm.m
The \rev{experiment ground truth} with 
% $n=m=L=2$ with 
$\Theta = [0,2] \times [-1,1]$ is
\begin{align}
    A_1^{\textrm{true}} &= \begin{bmatrix} -0.2396  & -0.5845 \\ 0.5845 & -0.2396 \end{bmatrix} \nonumber &     A_2^{\textrm{true}} &= \begin{bmatrix} -0.1696  & 0.8434 \\ 0.8434 & 0.4140 \end{bmatrix} \nonumber\\
    B^{\textrm{true}} &= \begin{bmatrix} 0& -1.0072 \\ 0.4848 & 0 \end{bmatrix}\label{eq:two_state}
\end{align}

The plant $A_2$ in \eqref{eq:two_state} is open-loop unstable for both continuous-time and discrete-time with eigenvalues of $-0.7703, 1.0146$. \rev{Data $\dc$ with $T=35$ was collected}
% Data in $\dc$ trajectory with $T = 35$ was recorded 
under an individual-sample noise bound of $\epsilon = 0.1$. 
% The following vertex-labeled controllers were synthesized by 
\subsubsection{Continuous-Time}
\rev{Eq. \eqref{eq:lpv_qmi_alg_c} synthesizes the following continuous-time vertex-controllers
\begin{align}
    K_{(0,1)} &= \begin{bmatrix}
          -4.5348 & -10.0625 \\
    9.9319  &  6.7597
    \end{bmatrix}   \nonumber\\  
    K_{(0,-1)} &= \begin{bmatrix}
             -4.7998 & -10.5553 \\
   10.7794  &  7.1231
    \end{bmatrix} \nonumber \\
        K_{(2,1)} &= \begin{bmatrix}
   -4.7566  & -9.8257\\
    9.5553 &   6.4091
    \end{bmatrix} \label{eq:two_state_k} \\        K_{(2,-1)} &= \begin{bmatrix}
       -4.7646  & -9.7462\\
    9.8597   & 6.4104
    \end{bmatrix}. \nonumber
\end{align}}
The \ac{LMI} parameters associated with $K$ in \eqref{eq:two_state_k} are
\rev{\begin{align}
    P &= \begin{bmatrix}
   0.0738& -0.0149\\ -0.0149& 0.0361
    \end{bmatrix} \in \psd_{++}^2 \label{eq:two_state_param}\\
    \alpha &= [  0.0535,
    0.0577,
    0.0518,
    0.0530] \in \R^4_{\geq 0} \nonumber \\
    \beta &= [10^{-5},10^{-5},10^{-5},10^{-5}] \in \R^4_{> 0}. \nonumber 
\end{align}}
The blue trajectories in Figure \ref{fig:param_seq} are system executions from 15 plants in the set $([A_1, A_2], B) \in \Sigma_\dc$ starting from the point $x(0) = [-2; 1.5]$. \rev{The parameter values $\theta$ are drawn uniformly from $[0,2]\times[-1,1]$ with exponentially distributed switching times (mean switching time is $0.05$).}
% The top plot displays a single parameter sequence in which $\theta_t$ is uniformly drawn from $[0,2]\times[-1,1]$, 
The red dotted-line in the top plot is the ground truth system from \eqref{eq:two_state} given the fixed parameter sequence. The bottom plot \rev{contains system trajectories for 30 parameter sequences on the ground truth and each of the 15 sampled plants}.

\begin{figure}[!ht]
    \centering
      \includegraphics[width=0.5\linewidth]{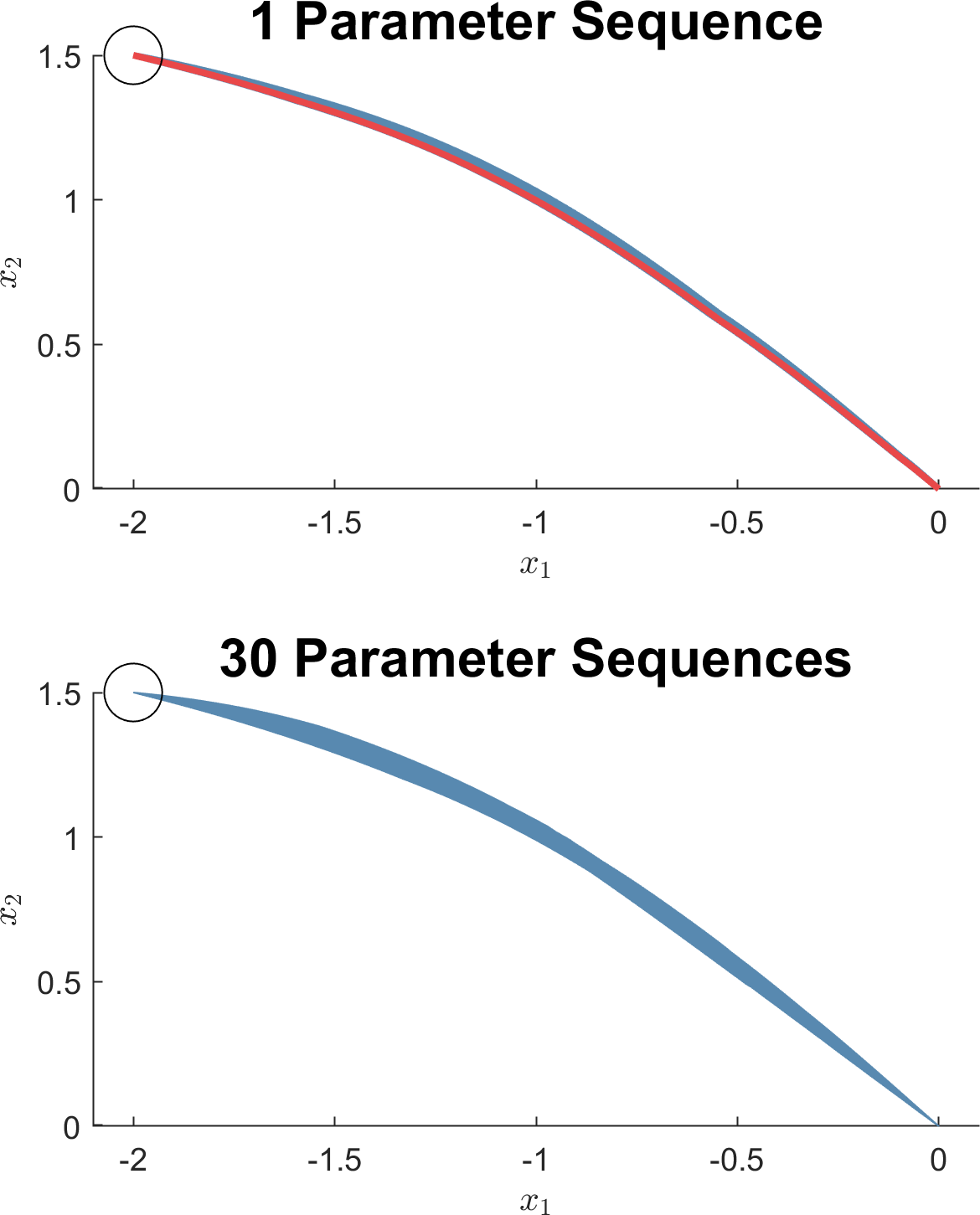}
    \caption{Plots of controlled trajectories using \eqref{eq:two_state_k}}
    \label{fig:param_seq}
    % \vspace{-1cm}
\end{figure}

\subsubsection{Discrete-Time}
Eq. \eqref{eq:lpv_qmi_alg_d} with the same data $X_\delta$ creates the following discrete-time vertex-controllers
% plants $(A_1, A_2, B)$
\begin{align}
    K_{\theta=(0,1)} &= \begin{bmatrix}
          -1.2258&   -0.6755 \\
    -0.1672   & 0.7948
    \end{bmatrix} \nonumber\\
    K_{\theta=(0,-1)} &= \begin{bmatrix}
          1.2258&   0.6755 \\
    0.1672   & -0.7948
    \end{bmatrix} \nonumber \\
        K_{\theta=(2,1)} &= \begin{bmatrix}
    -3.4132  & 0.1113 \\
    -0.6730  &    -0.3555
    \end{bmatrix}\label{eq:two_state_k_d} \\
        K_{\theta=(2,-1)} &= \begin{bmatrix}
    -0.5723&   1.4858 \\
    -0.3528   & -1.9440
    \end{bmatrix}. \nonumber
\end{align}

\rev{The resultant $P$ matrix is $[0.0588,   0.0014; 0.0014, 0.1022]$.
%  0.0588 &   0.0014 \\
%   0.0014  & 0.1022

Figure \ref{fig:param_seq_discrete} visualizes a discrete-time trajectory of the ground truth ground-truth and 15 sample plants when the controller \eqref{eq:two_state_k_d} is applied to a single parameter sequence starting at $x(0) = [-2, 1.5]$. The bottom plot displays a sampled reachable set attained from 30 parameter sequences and all plants (15 sample plants plus ground truth).
\begin{figure}[!ht]
    \centering
    \includegraphics[width=0.5\linewidth]{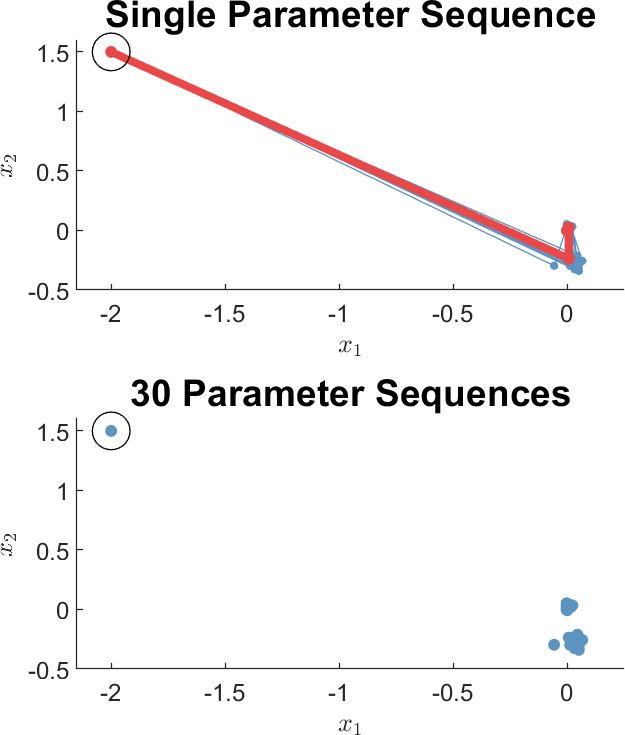}
    \caption{Plots of controlled trajectories using \eqref{eq:two_state_k}}
    \label{fig:param_seq_discrete}
    % \vspace{-1cm}
\end{figure}

}
The discrete-time worst-case controlled $H_2$ norm with $C = [I_2; \0_2], \ D = [\0_2; \sqrt{2} I_2], \ F = I_2$ is bounded by $\gamma = 9.334$ by \rev{Eq. \eqref{eq:lpv_qmi_h2}}. 

%A continuous-time trajectory generated from this system with $T=35$ yielded a controller with matrix-certificate $P = [0.0940, -0.0129; -0.0129, 0.0464]$. 

%\MS{I don't understand this. What do you mean by  "continuous time trajectory"?  Putting this here after discussing discrete time is confusing. Also quoting P but not S is kind of useless. Not sure why do you need to state P.}
% \urg{A possible figure demonstrating stabilization}

\subsection{Three-Parameter, Five-State}
The second experiment involves a system with $n=5, m=3, L=3$. The parametric set is $\Theta = [-0.3, 0.3] \times [0.2, 0.8] \times [0.5, 1.5]$ with $N_v = 8$. A trajectory is recorded with a time horizon of $T = 50$ and an individual-sample noise bound of $\epsilon = 0.1$. The associated $P$ matrix to the controller is
 
\begin{align}
    P &= \begin{bmatrix}
    0.268  & 0.141  & -0.112 & 0.113  & -0.171 \\
0.141  & 0.322  & -0.151 & 0.250  & -0.277 \\
-0.112 & -0.151 & 0.410  & 0.210  & -0.130 \\
0.113  & 0.250  & 0.210  & 1.226  & -1.138 \\
-0.171 & -0.277 & -0.123 & -1.138 & 1.191
    \end{bmatrix}. \nonumber
\end{align}

%\rev{Due to space constraints, we are unable to list the 8 associated vertex-controllers 
%$\{K_v \in \R^{3 \times 5}\}$ in this subsection.}

% \urg{Another visualization}

\section{Conclusion}

\label{sec:conclusion}

This work considered quadratic stabilization of all \ac{LPV} systems $(\aell, B) \in \Sigma_\dc(\Phi)$. \rev{\acp{SDP} \eqref{eq:lpv_qmi_alg_c} and \eqref{eq:lpv_qmi_alg_d}} perform this task by solving a set of $N_v+1$ \acp{LMI} in order to recover a gain-scheduled controller. The unknown \ac{LPVA} plants may be regulated using a worst-case $H_2$-optimal controller. Sparsity of the \acp{LMI} may be employed to speed up computation of these controllers.
Future work involves finding $K(\theta)$ policies using methods that scale based on the number of faces of $\Theta$ rather than on $N_v$ and reducing the conservatism of $K(\theta)$-controllers by letting $P$ depend on $\theta$\rev{.}
% (such as  $\theta$-affine structure in \cite{cox2018affine}), 
% and considering the case where each $w(t)$ is $L_\infty$-bounded.

% \urg{Conclude the paper. Summarize the problem and important findings. Add some extensions and future work.}

% \urg{This is a short paper, and is not likely to have an journal extension. If $H_2$ control does not fit, it will go into the arxiv paper.}

% An extended Arxiv version of this paper is available at \urg{[Arxiv link goes here] what will the arxiv version have that this work does not?}.

% \input{sections/appendix}

% %%%%%%%%%%%%%%%%%%%%%%%%%%%%%%%%%%%%%%%%%%%%%%%%%%%%%%%%%%%%%%%%%%%%%%%%%%%%%%%%
% \section{Acknowledgements}

% The authors thank Milan Korda for his discussions about occupation measures and time-varying uncertainty.

\bibliographystyle{IEEEtran}
\bibliography{references.bib}

% Generated by IEEEtran.bst, version: 1.14 (2015/08/26)
\begin{thebibliography}{10}
\providecommand{\url}[1]{#1}
\csname url@samestyle\endcsname
\providecommand{\newblock}{\relax}
\providecommand{\bibinfo}[2]{#2}
\providecommand{\BIBentrySTDinterwordspacing}{\spaceskip=0pt\relax}
\providecommand{\BIBentryALTinterwordstretchfactor}{4}
\providecommand{\BIBentryALTinterwordspacing}{\spaceskip=\fontdimen2\font plus
\BIBentryALTinterwordstretchfactor\fontdimen3\font minus
  \fontdimen4\font\relax}
\providecommand{\BIBforeignlanguage}[2]{{%
\expandafter\ifx\csname l@#1\endcsname\relax
\typeout{** WARNING: IEEEtran.bst: No hyphenation pattern has been}%
\typeout{** loaded for the language `#1'. Using the pattern for}%
\typeout{** the default language instead.}%
\else
\language=\csname l@#1\endcsname
\fi
#2}}
\providecommand{\BIBdecl}{\relax}
\BIBdecl

\bibitem{sename2013robust}
O.~Sename, P.~Gaspar, and J.~Bokor, \emph{Robust Control and Linear Parameter
  Varying Approaches Application to Vehicle Dynamics}.\hskip 1em plus 0.5em
  minus 0.4em\relax Springer, 2013, vol. 437.

\bibitem{apkarian1995self}
P.~Apkarian, P.~Gahinet, and G.~Becker, ``Self-scheduled {H}-infinity control
  of linear parameter-varying systems: a design example,'' \emph{Automatica},
  vol.~31, no.~9, pp. 1251--1261, 1995.

\bibitem{bachnas2014review}
A.~Bachnas, R.~T{\'o}th, J.~Ludlage, and A.~Mesbah, ``A review on data-driven
  linear parameter-varying modeling approaches: A high-purity distillation
  column case study,'' \emph{Journal of Process Control}, vol.~24, no.~4, pp.
  272--285, 2014.

\bibitem{rugh2000research}
W.~J. Rugh and J.~S. Shamma, ``Research on gain scheduling,''
  \emph{Automatica}, vol.~36, no.~10, pp. 1401--1425, 2000.

\bibitem{becker1993control}
G.~Becker, A.~Packard, D.~Philbrick, and G.~Balas, ``Control of
  parametrically-dependent linear systems: A single quadratic lyapunov
  approach,'' in \emph{1993 American Control Conference}.\hskip 1em plus 0.5em
  minus 0.4em\relax IEEE, 1993, pp. 2795--2799.

\bibitem{packard1994gain}
A.~Packard, ``Gain scheduling via linear fractional transformations,''
  \emph{Systems \& control letters}, vol.~22, no.~2, pp. 79--92, 1994.

\bibitem{scherer2001lpvqmi}
\BIBentryALTinterwordspacing
C.~Scherer, ``{LPV} control and full block multipliers,'' \emph{Automatica},
  vol.~37, no.~3, pp. 361--375, 2001. [Online]. Available:
  \url{https://www.sciencedirect.com/science/article/pii/S000510980000176X}
\BIBentrySTDinterwordspacing

\bibitem{HOU20133}
Z.-S. Hou and Z.~Wang, ``From model-based control to data-driven control:
  Survey, classification and perspective,'' \emph{Information Sciences}, vol.
  235, pp. 3--35, 2013, data-based Control, Decision, Scheduling and Fault
  Diagnostics.

\bibitem{hou2017datasurvey}
Z.~Hou, H.~Gao, and F.~L. Lewis, ``{Data-Driven Control and Learning
  Systems},'' \emph{IEEE Transactions on Industrial Electronics}, vol.~64,
  no.~5, pp. 4070--4075, 2017.

\bibitem{willems2005note}
J.~C. Willems, P.~Rapisarda, I.~Markovsky, and B.~L. De~Moor, ``A note on
  persistency of excitation,'' \emph{Systems \& Control Letters}, vol.~54,
  no.~4, pp. 325--329, 2005.

\bibitem{depersis2020formulas}
C.~De~Persis and P.~Tesi, ``{Formulas for Data-Driven Control: Stabilization,
  Optimality, and Robustness},'' \emph{IEEE Trans. Automat. Contr.}, vol.~65,
  no.~3, pp. 909--924, 2020.

\bibitem{coulson2019data}
J.~Coulson, J.~Lygeros, and F.~D{\"o}rfler, ``Data-enabled predictive control:
  {I}n the shallows of the {DeePC},'' in \emph{2019 18th European Control
  Conference (ECC)}.\hskip 1em plus 0.5em minus 0.4em\relax IEEE, 2019, pp.
  307--312.

\bibitem{berberich2020mpc}
J.~Berberich, J.~Köhler, M.~A. Müller, and F.~Allgöwer, ``{Data-Driven Model
  Predictive Control With Stability and Robustness Guarantees},'' \emph{IEEE
  Trans. Automat. Contr.}, vol.~66, no.~4, pp. 1702--1717, 2021.

\bibitem{waarde2020noisy}
H.~J. van Waarde, M.~K. Camlibel, and M.~Mesbahi, ``From noisy data to feedback
  controllers: non-conservative design via a matrix {S}-lemma,'' \emph{IEEE
  Trans. Automat. Contr.}, 2020.

\bibitem{yakubovich1997s}
V.~A. Yakubovich, ``{S-Procedure in Nonlinear Control Theory},'' \emph{Vestnick
  Leningrad Univ. Math.}, vol.~4, pp. 73--93, 1997.

\bibitem{waarde2022qmi}
\BIBentryALTinterwordspacing
H.~J. van Waarde, M.~K. Camlibel, J.~Eising, and H.~L. Trentelman, ``Quadratic
  matrix inequalities with applications to data-based control,'' 2022.
  [Online]. Available: \url{https://arxiv.org/abs/2203.12959}
\BIBentrySTDinterwordspacing

\bibitem{dai2021nonlinear}
T.~Dai and M.~Sznaier, ``{Nonlinear Data-Driven Control via State-Dependent
  Representations},'' in \emph{2021 60th IEEE Conference on Decision and
  Control (CDC)}.\hskip 1em plus 0.5em minus 0.4em\relax IEEE, 2021, pp.
  5765--5770.

\bibitem{berberich2020combining}
J.~Berberich, C.~W. Scherer, and F.~Allg{\"o}wer, ``{Combining Prior Knowledge
  and Data for Robust Controller Design},'' \emph{arXiv preprint
  arXiv:2009.05253}, 2020.

\bibitem{BERBERICH2021210}
J.~Berberich, S.~Wildhagen, M.~Hertneck, and F.~Allgöwer, ``Data-driven
  analysis and control of continuous-time systems under aperiodic sampling,''
  \emph{IFAC-PapersOnLine}, vol.~54, no.~7, pp. 210--215, 2021, 19th IFAC
  Symposium on System Identification SYSID 2021.

\bibitem{eising2022informativity}
J.~Eising and J.~Cort{\'e}s, ``Informativity for centralized design of
  distributed controllers for networked systems,'' in \emph{2022 European
  Control Conference (ECC)}.\hskip 1em plus 0.5em minus 0.4em\relax IEEE, 2022,
  pp. 681--686.

\bibitem{bisoffi2022learning}
A.~Bisoffi, C.~De~Persis, and P.~Tesi, ``Learning controllers for performance
  through lmi regions,'' \emph{IEEE Transactions on Automatic Control}, 2022.

\bibitem{formentin2016direct}
S.~Formentin, D.~Piga, R.~T{\'o}th, and S.~M. Savaresi, ``Direct learning of
  {LPV} controllers from data,'' \emph{Automatica}, vol.~65, pp. 98--110, 2016.

\bibitem{piga2018direct}
D.~Piga, S.~Formentin, and A.~Bemporad, ``Direct data-driven control of
  constrained systems,'' \emph{IEEE Transactions on Control Systems
  Technology}, vol.~26, no.~4, pp. 1422--1429, 2018.

\bibitem{verhoek2021fundamental}
C.~Verhoek, R.~T{\'o}th, S.~Haesaert, and A.~Koch, ``{Fundamental Lemma for
  Data-Driven Analysis of Linear Parameter-Varying Systems},'' in \emph{2021
  60th IEEE Conference on Decision and Control (CDC)}.\hskip 1em plus 0.5em
  minus 0.4em\relax IEEE, 2021, pp. 5040--5046.

\bibitem{dai2018moments}
T.~Dai and M.~Sznaier, ``{A Moments Based Approach to Designing MIMO Data
  Driven Controllers for Switched Systems},'' in \emph{2018 IEEE Conference on
  Decision and Control (CDC)}.\hskip 1em plus 0.5em minus 0.4em\relax IEEE,
  2018, pp. 5652--5657.

\bibitem{khatri1968solutions}
C.~Khatri and C.~R. Rao, ``{Solutions to Some Functional Equations and Their
  Applications to Characterization of Probability Distributions},''
  \emph{Sankhy{\=a}: The Indian Journal of Statistics, Series A}, pp. 167--180,
  1968.

\bibitem{besselmannlofberg2012}
T.~Besselmann and J.~L{\"o}fberg, ``Explicit {MPC for LPV} systems: stability
  and optimality,'' \emph{IEEE Trans. Automat. Contr.}, 2012.

\bibitem{boyd2004convex}
S.~Boyd, S.~P. Boyd, and L.~Vandenberghe, \emph{{Convex Optimization}}.\hskip
  1em plus 0.5em minus 0.4em\relax Cambridge university press, 2004.

\bibitem{boyd1994linear}
S.~Boyd, L.~El~Ghaoui, E.~Feron, and V.~Balakrishnan, \emph{{Linear Matrix
  Inequalities in System and Control Theory}}.\hskip 1em plus 0.5em minus
  0.4em\relax SIAM, 1994.

\bibitem{waarde2020informativity}
H.~J. Van~Waarde, J.~Eising, H.~L. Trentelman, and M.~K. Camlibel, ``Data
  informativity: a new perspective on data-driven analysis and control,''
  \emph{IEEE Trans. Automat. Contr.}, vol.~65, no.~11, pp. 4753--4768, 2020.

\bibitem{alizadeh1995interior}
F.~Alizadeh, ``{Interior Point Methods in Semidefinite Programming with
  Applications to Combinatorial Optimization},'' \emph{SIAM J OPTIMIZ}, vol.~5,
  no.~1, pp. 13--51, 1995.

\bibitem{de2003parametric}
C.~De~Souza, A.~Trofino, and J.~De~Oliveira, ``Parametric lyapunov function
  approach to h2 analysis and control of linear parameter-dependent systems,''
  \emph{IEE Proceedings-Control Theory and Applications}, vol. 150, no.~5, pp.
  501--508, 2003.

\bibitem{mosek92}
\BIBentryALTinterwordspacing
M.~ApS, \emph{The MOSEK optimization toolbox for MATLAB manual. Version 9.2.},
  2020. [Online]. Available:
  \url{https://docs.mosek.com/9.2/toolbox/index.html}
\BIBentrySTDinterwordspacing

\bibitem{Lofberg2004}
J.~L{\"{o}}fberg, ``{YALMIP : A Toolbox for Modeling and Optimization in
  MATLAB},'' in \emph{In Proceedings of the CACSD Conference}, Taipei, Taiwan,
  2004.

\end{thebibliography}

\end{document}